\documentclass[reqno,12pt,twosided,a4paper]{amsart}
\usepackage{amssymb,amsfonts,latexsym,mathrsfs,amsmath}
\usepackage[all]{xy}
\usepackage{array}

\usepackage{amstext}
\usepackage{amssymb}
\usepackage{amsfonts}
\usepackage{amsthm}
\usepackage{amsopn}
\usepackage{amsbsy}
\usepackage{layout}

\numberwithin{equation}{section}

\newtheorem{theorem}{Theorem}[section]
\newtheorem{lemma}[theorem]{Lemma}
\newtheorem{proposition}[theorem]{Proposition}
\newtheorem{corollary}[theorem]{Corollary}

\theoremstyle{definition}
\newtheorem{definition}[theorem]{Definition}

\newtheorem{remark}[theorem]{Remark}





\newcommand{\Ga}{\Gamma}
\newcommand{\Z}{\mathbb{Z}}
\newcommand{\italic}{\textit}

\newcommand{\f}{\mathscr{F}}

\newcommand{\lhf}{\mathbf{LH}\f}

\newcommand{\ben}{\begin{enumerate}}
\newcommand{\een}{\end{enumerate}}

\newcommand{\mapsonto}{\twoheadrightarrow}
\newcommand{\cds}{\Theta^{\oplus}}
\hyphenation{se-mi-simple co-se-mi-simple}

\begin{document}
\title[projective resolutions]{Projective resolutions for modules over infinite groups}
\author{Ehud Meir}
\date{6 October, 2010}
\maketitle
\begin{abstract}
We define a notion of complexity for modules over group rings of infinite groups.
This generalizes the notion of complexity for modules over group algebras of finite groups.
We show that if $M$ is a module over the group ring $kG$, where $k$ is any ring and $G$ is any group,
and $M$ has $f$-complexity (where $f$ is some complexity function) over some set of finite index subgroups of $G$,
then $M$ has $f$-complexity over $G$ (up to a direct summand). This generalizes the Alperin-Evens Theorem,
which states that if the group $G$ is finite then the complexity of $M$ over $G$ is the maximal complexity of $M$
over an elementary abelian subgroup of $G$. We also show how we can use this generalization in order to construct
projective resolutions for the integral special linear groups, $SL(n,\Z)$, where $n\geq 2$.
\end{abstract}

\begin{section}{Introduction}
Let $G$ be a finite group, let $p$ be a prime divisor of $|G|$, and let $k$ be a field of characteristic $p$.
Let $M$ be a finitely generated $kG$-module. By a theorem of Alperin and Evens (see \cite{AE}),
we know that the nonprojectivity of $M$ over $kG$ is determined by its nonprojectivity over $kE$,
where $E$ ranges over elementary abelian $p$-subgroups of $G$.

In order to state Alperin-Evens Theorem, we need the definition of complexity of a module.
If $M$ is a finitely generated $kG$-module, we say that a projective resolution $P^*\rightarrow M$ is minimal if $P^*$
is a direct summand of any other projective resolution $Q^*$ of $M$.
It follows that $rank_{kG}(P^n)\leq rank_{kG}(Q^n)$ for every $n$
(where by $rank_{kG}(M)$ we mean the minimal cardinality of a generating set of $M$ over $kG$).
In the case where $G$ is finite and $k$ is a field of prime characteristic,
every finitely generated module $M$ has a unique minimal projective resolution (see Chapter 2.4 of \cite{Evens} for a proof of this).

Let $P^*\rightarrow M$ be a minimal projective resolution of $M$. It is known that the sequence of numbers $a_n = rank_{kG}(P^n)$
has polynomial growth. We say that the complexity of $M$ is $c$, if the growth rate of the sequence $(a_n)$
is the same as the growth rate of the sequence $(n^{c-1})$.
We denote the complexity of $M$ by $c_{G}(M)$. The theorem of Alperin and Evens is the following:
\begin{theorem}\label{complexity} Let $G$ be a group, let $k$ be a field of characteristic $p$ and let $M$ be a $kG$-module.
Then \begin{equation}c_{G}(M)=max_E(c_{E}(M))\end{equation}
where the maximum is taken over all elementary abelian $p$-subgroups of $G$.\end{theorem}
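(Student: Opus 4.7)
The plan is to establish the equality as two inequalities. For $\max_E c_E(M) \leq c_G(M)$, fix a subgroup $H \leq G$. Since $G$ is finite, $kG$ is a free $kH$-module of rank $[G:H]$; hence restriction carries projectives to projectives, and a minimal projective resolution $P^\bullet \to M$ over $kG$ restricts to a projective (though generally not minimal) resolution over $kH$ with $\text{rank}_{kH}(P^n{\downarrow}_H) = [G:H]\cdot\text{rank}_{kG}(P^n)$. The minimal resolution over $kH$ is a direct summand, so its ranks are majorised by these, giving $c_H(M) \leq c_G(M)$; specialising to $H = E$ elementary abelian gives the easy direction.

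For the substantive direction $c_G(M) \leq \max_E c_E(M)$, I would pass through cohomological support varieties. By Evens the commutative $k$-algebra $H^{\mathrm{ev}}(G,k)$ is finitely generated, and by Evens--Venkov $\text{Ext}^\bullet_{kG}(M,M)$ is finitely generated over it. Set
\[
V_G(M) = \text{MaxSpec}\bigl(H^{\mathrm{ev}}(G,k)/\text{Ann}\,\text{Ext}^\bullet_{kG}(M,M)\bigr).
\]
Two results tie this variety to the theorem. First, a standard Hilbert-polynomial computation identifies the polynomial growth rate of $\dim_k \text{Ext}^n_{kG}(M,k)$ (and hence of $a_n = \text{rank}_{kG}(P^n)$, by minimality) with the Krull dimension of $V_G(M)$, so $c_G(M) = \dim V_G(M)$. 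Second, Quillen's stratification theorem, combined with naturality of the restriction maps $\text{res}^\ast_{G,E}\colon V_G \to V_E$, yields $V_G(M) = \bigcup_E \text{res}^\ast_{G,E}(V_E(M))$, the union running over elementary abelian $p$-subgroups $E$ of $G$. As each restriction is a finite morphism, $\dim V_G(M) = \max_E \dim V_E(M)$, so $c_G(M) = \max_E c_E(M)$.

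The main obstacle is Quillen's stratification, the one genuinely deep input: the remaining ingredients (Evens finite generation, Evens--Venkov, and the equality of complexity with Krull dimension) are essentially commutative algebra once support varieties are in hand. Quillen's theorem itself rests on a nontrivial equivariant cohomology/spectral sequence argument controlling $\text{Spec}\,H^\bullet(G,k)$ by the colimit over elementary abelian $p$-subgroups. If one wishes to avoid the algebro-geometric machinery entirely, the historical Alperin--Evens route bootstraps Chouinard's detection theorem -- that $M$ is projective over $kG$ iff $M{\downarrow}_E$ is projective for every elementary abelian $p$-subgroup $E$ -- to the complexity statement by iterated passage to Heller translates $\Omega^n M$, converting detection of projectivity into detection of polynomial growth rank by rank. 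Either way, the crux lies in upgrading the detection of the $c = 0$ case to detection of the growth rate, and this is where I would expect to spend the bulk of the work.
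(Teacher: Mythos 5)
Your proof is correct, but it takes a genuinely different route from the paper's. You pass through cohomological support varieties: Evens--Venkov finite generation lets one identify $c_G(M)$ with the Krull dimension of $V_G(M)$, and the Quillen stratification theorem for modules (the module version is due to Avrunin and Scott and requires considerably more than the naturality of restriction; note also that $\mathrm{res}^*$ maps $V_E$ into $V_G$, not the other way) gives $V_G(M)=\bigcup_E \mathrm{res}^*_{G,E}(V_E(M))$, whence the hard inequality follows by comparing dimensions. The paper deliberately avoids this machinery and instead \emph{constructs} resolutions explicitly via Wall's complex: Serre's theorem supplies index-$p$ subgroups $L_1,\dots,L_m$ with $\beta_{L_1}^M\cdots\beta_{L_m}^M=0$; one represents this product by a concrete $2m$-step exact sequence whose middle terms are induced from the $L_i$, builds a projective complex over it out of resolutions over the $L_i$ using Wall's construction, and exploits the vanishing of the Bockstein product to split off a projective resolution of $M\oplus N$ of the prescribed growth rate (Propositions \ref{resolutions main1}, \ref{main2}, \ref{main3}); Theorem \ref{complexity} is then recovered via Remarks \ref{remark1} and \ref{finiteindexremark}. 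What the paper's route buys is generality: the Wall-complex argument makes sense over an arbitrary coefficient ring $k$ and for infinite $G$ with a finite-index normal subgroup, neither of which the support-variety formalism (which needs a field of characteristic $p$ and, in substance, a finite group) can handle. Your route is shorter over finite groups but does not generalize to the setting the paper actually needs.

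One correction to your closing aside: the historical Alperin--Evens argument did not proceed by ``bootstrapping Chouinard's theorem via iterated Heller translates.'' Passing to $\Omega^n M$ cannot upgrade a qualitative projectivity criterion into a rank-growth bound, since $\Omega^n M$ has the same complexity as $M$; iterating it gains nothing. Alperin and Evens argued directly with the growth rate of $H^*(G,M)$ and Serre's product-vanishing theorem after reducing to $p$-groups via the norm map; Chouinard's theorem over a field of characteristic $p$ is the $c=0$ \emph{corollary} of their result, not an input to it.
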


Alperin and Evens proved the theorem in the following way: first, they reduce the general case to the case where $G$ is a $p$-group.
Then they use the fact that the complexity can be computed as the growth rate of the cohomology groups $H^*(G,M)$
(this follows from the fact that if $G$ is a $p$-group, then $kG$ has only one simple module, the trivial module $k$).
Then, in order to prove that the growth rate of the cohomology groups $H^*(G,M)$ is bounded by the growth rate over
the elementary abelian subgroups, they use a theorem of Serre, which states that a finite $p$-group $G$ is not elementary
abelian if and only if a certain product vanishes in the cohomology ring of $G$ (the constituents of this products are the Bocksteins of non trivial homomorphisms $G\rightarrow \Z_p$).

It is known that over a field of prime characteristic, a module $M$ has complexity 0 if and only if it is projective
(see Corollary 8.4.2 in \cite{Evens}). One of the results of Theorem \ref{complexity} is therefore Chouinard's theorem
(for the special case where $k$ is a field of prime characteristic), which states that $M$ is projective over $G$ if and only
if it is projective over each elementary abelian subgroup of $G$.

The theory of complexity was extended in \cite{bcr1} and in \cite{bcr2} by Benson, Carlson and Rickard to infinitely generated modules over $kG$,
where $G$ is a finite group and $k$ a field of prime characteristic. In \cite{bcr1} the authors proved that their definition of complexity
is equivalent to the following one: a module $M$ has complexity $c$ if and only if $M$ is a filtered colimit of finitely generated modules
of complexity $c$, but not a filtered colimit of finitely generated modules of complexity less than $c$.

In \cite{Ben2}, Benson used this theory in order to define complexity for $FP_{\infty}$ modules over $kG$,
where $k$ is a field of prime characteristic and $G$ is a group in Kropholler's hierarchy, $\lhf$
(an $FP_{\infty}$ module is a module which has a projective resolution which is finitely generated in each dimension.
An exposition of Kropholler's hierarchy can be found in \cite{krop}).
For a group $G \in\lhf$, and a module $M$ of type $FP_{\infty}$ over $G$, he proved that the set of
complexities of $M$ over finite subgroups of $G$ is bounded, and he defined the complexity of $M$ over $G$
to be the supremum of this set (the theory developed in \cite{bcr1} is needed here because the module $M$ might
not be finitely generated over the finite subgroups of $G$). He also gave an example of a module $M$ over the group
$G=\Z_2\times\Z_2\times\Z$ with a periodic resolution which has complexity 2
(and so, his definition of complexity does not agree with the definition of complexity as the minimal growth rate of a projective resolution).

The notion of complexity we will consider in this paper will be a generalization of the notion of complexity
for finite groups and will be based on growth rate of projective resolutions (and so will be different from the notion of complexity defined by Benson).

The idea in our computations will be the following: we will not consider growth rate of cohomology groups in order to study complexity of modules
(as in the proof of the Alperin-Evens Theorem). Instead, we will use a complex that was originally constructed by C. T. C. Wall
in order to construct projective resolutions explicitly, and show that their growth rate is less than or equal to a given function.
In order to do so, we will also use the same theorem of Serre that is used in the proof of the Alperin-Evens Theorem.
Wall's construction will enable us to consider arbitrary rings of coefficients and arbitrary groups, and not just fields and finite groups or groups in $\lhf$.

We will need two technical adjustments of the notions.
First, if $M$ is a $kG$-module where $k$ is any unital ring and $G$ is any group,
$M$ will not necessarily have a minimal projective resolution.
However, the fact that $M$ has \italic{some} projective resolution
with certain growth gives us some information about $M$.
We will therefore write $M\in\Theta_{kG}(f)$ and say that ``$M$ has $f$-complexity'' over $kG$
(where $f$ is some ``complexity function'', a notion which we will define in Section \ref{prelim})
if there is \italic{some} projective resolution $P^*$ of $M$ of growth rate $\leq f$.
In particular, the module under consideration will be an $FP_{\infty}$ module.
Notice that $M$ may have a variety of functions $f$ for which
$M\in\Theta_{kG}(f)$.  However, if $G$ is finite and $k$ is a field,
then $M$ will have complexity $c$ if and only if $M\in\Theta_{kG}(n^{c-1})$
and $M\notin\Theta_{kG}(n^{c-2})$.

Second, in most cases, we will be able to show that a module $M$ has $f$-complexity over $kG$ only up to a direct summand.
This means that we will only be able to show that there is a $kG$-module $N$ such that $M\oplus N$ has $f$-complexity over $kG$.
We will denote this situation by saying that $M$ has $f$-direct-summand-complexity (and write $M\in\Theta^{\oplus}_{kG}(f)$).
In case the group $G$ is finite and $k$ is a field,
it is possible to show that $M\in\Theta^{\oplus}_{kG}(f)$ if and only if $M\in\Theta_{kG}(f)$.
In general, it seems reasonable that the two conditions will be equivalent.
However, I do not know a proof of that.

In Section \ref{prelim} we will give the relevant definitions,
and prove some general facts about the classes $\Theta_{kG}(f)$ and $\cds_{kG}(f)$.
We will show that if $G$ is a group and $H$ a finite index normal subgroup,
then if $M\in\cds_{kT}(f)$ for every subgroup $H<T<G$ such that $T/H$ is a
$p$-Sylow subgroup for some prime $p$, then $M\in\cds_{kG}(f)$.
In Section \ref{wallconstruction} we will recall the construction of Wall's complex.
In Section \ref{elemab} we will consider the case where $G/H$ is a $p$-elementary abelian group.
We will show that under a certain assumption on the cohomology ring of $M$, $Ext^*_{kG}(M,M)$,
if $M\in\cds_{kA}(f)$ for some of the subgroups $H<A<G$ such that $A/H$ is a maximal proper subgroup of $G/H$,
then also $M\in\cds_{kG}(f)$. In Section \ref{consequences} we will show how we can use this result together with
Serre's Theorem and the construction from Section \ref{wallconstruction} in order to prove that if $M\in\cds_{kE}(f)$
for every subgroup $H<E<G$ such that $E/H$ is elementary abelian, then $M\in\cds_{kG}(f)$. By considering the case where
$G$ is finite and $k$ is a field of prime characteristic, we get Theorem \ref{complexity}. In Section \ref{SLN} we will present
an application to special linear groups over $\Z$. We will show that the trivial module $\Z$ satisfies $\Z\in\cds_{\Z SL(n,\Z)}(f)$
where $f(a)=a^{n-2}$, and we will also show that it is the best result possible, that is, we do not have a projective resolution
of polynomial growth rate of lower degree.
\end{section}

\begin{section}{Preliminaries}\label{prelim}
We would like to define properly the classes $\Theta_{kG}(f)$ and $\cds_{kG}(f)$. We begin with the following:
\begin{definition}\label{compfun} Let $f:\mathbb{N}\rightarrow\mathbb{R}_{+}$ be a function. We will say that $f$ is a proper complexity
function if the following condition holds:
There are two positive real numbers $c_1$ and $c_2$ such that $c_1<f(m+1)/f(m)<c_2$ for every $m\in\mathbb{N}$.
\end{definition}
For example, $n^a$, $log(n+1)$, $2^n$ and $2^{\sqrt{n}}$ are all complexity functions, while the function $n!$ is not.
The condition in the definition simply says that the growth rates of
$f(m)$ and of $f(m+1)$ are equal up to multiplication by some constant number.
\begin{remark}
In the fifth chapter of \cite{enc} an equivalence relation is defined on growth rates.
Two growth functions $f$ and $g$ are considered to be equivalent if and only if
there are constants $c_1,m_1,c_2,m_2$ such that $f(x)<c_1g(xm_1)$ and $g(x)<c_2f(xm_2)$.
We prefer not to consider this equivalence relation here because we would like
to distinguish, for example, the functions $2^{\sqrt{n}}$ and $3^{\sqrt{n}}$ (and also $2^n$ and $3^n$).
\end{remark}

For any ring $R$, the \italic{rank} of a finitely generated $R$-module $B$ is the minimal cardinality of a generating set of $B$.
We can now give our main definition:
\begin{definition}\label{complexitydef} Let $k$ be a ring, $G$ a group, and $M$ a $kG$-module.
Let $f$ be a proper complexity function. We say that $M$ has $f$-complexity
(and write $M\in\Theta_{kG}(f)$) if there is a projective resolution $P^*\rightarrow M\rightarrow 0$
and a number $d$ such that for $n\geq 0$ we have $rank_{kG}P^n\leq df(n)$.
We will say that $M$ has $f$-direct-summand-complexity and write
$M\in\cds_{kG}(f)$ in case there is a $kG$-module $N$ for which $M\oplus N\in\Theta_{kG}(f)$.\end{definition}
Some remarks are in order about this definition:
\begin{remark}
We do not use in the definition the fact that $f$ is a proper complexity function.
However, we need the properness assumption in most of what follows.
\end{remark}
\begin{remark}Notice in particular that if $M\in\Theta_{kG}(f)$ for some $k,G,M$ and $f$,
then $M$ has a projective resolution in which all the terms are finitely generated.
In other words, $M$ is an $FP_{\infty}$-module.
We will assume henceforth that all modules under consideration are $FP_{\infty}$ modules.
\end{remark}
\begin{remark}
If we have an onto map $(kG)^l\rightarrow (kG)^m$ for some $l$ and $m$ such that $l<m$, then
all finitely generated modules over $kG$ will have rank $\leq l$.
In that case, our discussion will still be valid but completely trivial.
By the right exactness of the tensor multiplication functor,
it is easy to see that this phenomenon does not occur, for example,
when there is a ring homomorphism from $k$ to some commutative ring or
to some skew field.
\end{remark}
\begin{remark}
Recall that if $G$ is a finite group and $k$ is a field,
then we say that a finitely generated $kG$-module $M$
has complexity $c$ if and only if its minimal projective resolution has growth rate $n^{c-1}$.
Since in this case any finitely generated module will have minimal projective resolution
of polynomial growth rate, it is easy to see that $M$ has complexity $c$
if and only if $M\in\Theta_{kG}(n^{c-1})$ and $M\notin\Theta_{kG}(n^{c-2})$.
In that sense, our discussion generalizes the notion of complexity, and for that reason we will be able to retrieve
Theorem \ref{complexity} as a special case of Theorem \ref{main3} (in order to do so, we will
also need to use Remarks \ref{remark1} and \ref{finiteindexremark}).
For modules over group rings of infinite groups, we do not necessarily have
a projective resolution of polynomial growth. See for example Theorem 2.6 of \cite{Stein}.
\end{remark}
\begin{remark}\label{remark1}
In case the group $G$ is finite and the ring $k$ is a field,
$M\in\cds_{kG}(f)$ if and only if $M\in\Theta_{kG}(f)$.
This is due to the existence of a minimal projective resolution for $M$ over $kG$.
It seems reasonable that this is true for any ring $k$ and any group $G$, but I do not know a proof of that.
\end{remark}
\begin{remark}
Suppose that $M$ is a $kG$-module and that $f$ is a proper complexity function.
If $M\in\Theta_{kG}(f)$, then we can use a projective resolution $P^*$ as in definition \ref{complexitydef}
in order to conclude that the $n$-th syzygy of $M$ has a resolution of growth rate $f(?+n)$.
The fact that $f$ is proper implies that every syzygy of $M$ is also in $\cds_{kG}(f)$.
It also implies that if the $n$-th syzygy of $M$ is in $\cds_{kG}(f)$,
then $M$ is also in $\cds_{kG}(f)$.
\end{remark}
\begin{remark}\label{finiteindexremark}
Notice that if $H$ is a subgroup of $G$ of finite index, then $M\in\Theta_{kG}(f)$
implies that $M\in\Theta_{kH}(f)$ (and similarly for $\cds$).
This is because a projective resolution for $M$ over $kG$ of growth rate $\leq f$
is also a projective resolution for $M$ over $kH$ of growth rate $\leq f$.
This property might fail if $H$ is not a finite index subgroup.
This happens for example in case $G=F$ is Thompson's group. It is known that $G$ is an $FP_{\infty}$ group,
(i.e. the trivial $\Z G$ module $\Z$ is $FP_{\infty}$) which has a subgroup $H$ which is
free abelian of infinite rank (in particular, $H$ is not finitely generated).
For more details on Thompson's group, see \cite{BrG}.
\end{remark}
\begin{remark}
 We can think about $\Theta_{kG}(f)$ as the class of all modules for which there exist a projective resolution
with growth rate bounded by $f$ (and similarly for $\cds_{kG}(f)$).
In this way the terminology $M\in\Theta_{kG}(f)$ makes sense.
\end{remark}

We prove now two general facts about complexity which we will need in the sequel.
\begin{lemma}\label{induction} Let $k,G,M,f$ be as above, and let $H$ be a subgroup of $G$.
If $M\in\Theta_{kH}(f)$, then $Ind_H^G(M)\in\Theta_{kG}(f)$\end{lemma}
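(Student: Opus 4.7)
The plan is to push a given $kH$-projective resolution of $M$ through the induction functor and check that the key numerical invariant, the $kG$-rank, is preserved.

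First I would fix a resolution $P^\ast\to M\to 0$ over $kH$ whose existence is guaranteed by the hypothesis $M\in\Theta_{kH}(f)$, together with a constant $d$ such that $\rank_{kH}(P^n)\le d f(n)$ for all $n\ge 0$. Next I would apply $\Ind_H^G=kG\otimes_{kH}(-)$ term-by-term to get a complex $\Ind_H^G(P^\ast)\to \Ind_H^G(M)\to 0$ over $kG$.

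Two facts must then be verified. The first is exactness of $\Ind_H^G$: choosing a set of left coset representatives for $H$ in $G$ exhibits $kG$ as a free right $kH$-module, so tensoring with $kG$ over $kH$ is exact; consequently the induced complex is again a resolution. The second is that induction preserves projectivity: it sends $kH$ to $kG$, hence free $kH$-modules to free $kG$-modules, and thus projectives to projectives by the standard summand argument. Together these give a genuine projective resolution of $\Ind_H^G(M)$ over $kG$.

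The final step is the rank estimate. If $x_1,\dots,x_m$ generate $P^n$ over $kH$, then $1\otimes x_1,\dots,1\otimes x_m$ generate $kG\otimes_{kH}P^n$ over $kG$, so
\begin{equation*}
\rank_{kG}\bigl(\Ind_H^G(P^n)\bigr)\;\le\;\rank_{kH}(P^n)\;\le\; d f(n),
\end{equation*}
which by Definition \ref{complexitydef} gives $\Ind_H^G(M)\in\Theta_{kG}(f)$ with the same constant $d$. There is no serious obstacle here; the only point that deserves a line of justification is the freeness of $kG$ as a right $kH$-module via coset representatives, which underwrites both the exactness of induction and the rank inequality above.
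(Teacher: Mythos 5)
Your proof is correct and follows essentially the same route as the paper: apply the induction functor term-by-term to a $kH$-projective resolution, invoke exactness and preservation of projectives, and observe that the $kG$-rank of an induced module is at most the $kH$-rank. The paper states these facts more tersely, but the argument is identical; you simply supply the standard justifications (freeness of $kG$ over $kH$ via coset representatives) that the paper leaves implicit.
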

\begin{proof}
Suppose that $P^*\rightarrow M$ is a projective resolution of $M$ over $kH$ which satisfies $rank_{kH}(P^n)\leq df(n)$.
Since the induction functor from $kH$ to $kG$ is exact and takes projective modules to projective modules,
$Ind_{kH}^{kG}(P^*)\rightarrow Ind_{kH}^{kG}(M)$ is a projective resolution of $Ind_{kH}^{kG}(M)$ over $kG$
which satisfies $rank_{kG}(Ind_{kH}^{kG}(P^n))\leq df(n)$. Therefore, $Ind_H^G(M)\in\Theta_{kG}(f)$.\end{proof}
\begin{remark} The lemma is also true if we replace $\Theta_{kG}$ by $\cds_{kG}$ and $\Theta_{kH}$ by $\cds_{kH}$.\end{remark}

\begin{lemma}\label{psylow} Let $H$ be a finite index normal subgroup of $G$.
Assume that $M\in\cds_{kS}(f)$ for every subgroup $H<S<G$ such that $S/H$ is a $p$-Sylow subgroup of $G/H$.
Then $M\in\cds_{kG}(f)$.\end{lemma}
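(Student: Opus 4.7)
The plan is to exhibit $M$ as a direct summand of the $kG$-module $\bigoplus_p \Ind_{S_p}^G M$, where $p$ ranges over the primes dividing $|G/H|$ and $S_p$ denotes a fixed preimage in $G$ of a $p$-Sylow of $Q := G/H$. Since $H$ has finite index in $G$, $Q$ is finite, and by Sylow theory each index $[G:S_p] = [Q:S_p/H]$ is coprime to $p$; hence the integers $\{[G:S_p]\}_p$ have global gcd $1$, and Bezout yields integers $a_p$ with $\sum_p a_p [G:S_p] = 1$. Pushing this identity through the unique ring map $\Z \to k$, we get $\sum_p a_p [G:S_p] \cdot \id_M = \id_M$ in $\End_{kG}(M)$, with no hypothesis on the characteristic of $k$.

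For each $p$ I would then use the standard unit and counit of the finite-index induction--restriction adjunction. Picking coset representatives $\{g_{p,i}\}_{i=1}^{[G:S_p]}$ of $G/S_p$, set
\[\alpha_p(m) = \sum_i g_{p,i} \otimes g_{p,i}^{-1} m, \qquad \beta_p(g \otimes m) = gm.\]
A routine verification shows $\alpha_p \colon M \to \Ind_{S_p}^G M$ is $kG$-linear and independent of the chosen representatives, $\beta_p \colon \Ind_{S_p}^G M \to M$ is $kG$-linear, and their composite $\beta_p \circ \alpha_p = [G:S_p] \cdot \id_M$ is the classical transfer. Assembling across primes, the $kG$-linear map $\gamma \colon M \to \bigoplus_p \Ind_{S_p}^G M$ with $\gamma(m) = (\alpha_p(m))_p$ has $\delta := \sum_p a_p \beta_p$ as a left inverse, since $\delta \circ \gamma = \sum_p a_p [G:S_p]\cdot\id_M = \id_M$. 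This realises $M$ as a direct summand of $\bigoplus_p \Ind_{S_p}^G M$ as $kG$-modules.

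To finish, I would run this splitting through the closure properties of $\cds_{kG}(f)$. The hypothesis gives $M \in \cds_{kS_p}(f)$ for each $p$, so by Lemma \ref{induction} in its $\cds$-version (stated in the remark right after it), each $\Ind_{S_p}^G M$ lies in $\cds_{kG}(f)$. The class $\cds_{kG}(f)$ is closed under finite direct sums (concatenate the resolutions of the summands) and under direct summands (if $(X \oplus Y) \oplus N \in \Theta_{kG}(f)$ then $X \oplus (Y \oplus N) \in \Theta_{kG}(f)$), and applying these closures to the splitting above gives $M \in \cds_{kG}(f)$. The main conceptual step, and the only place where anything could be subtle, is the Bezout/transfer argument: it is what permits the proof to go through over an arbitrary coefficient ring, using only the integer coprimality of the indices $[G:S_p]$.
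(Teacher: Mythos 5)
Your proof is correct and follows essentially the same route as the paper: the key step in both is to use the transfer maps $\alpha_p$, $\beta_p$ (the paper calls them $i_p$, $q_p$) and the coprimality of the indices $[G:S_p]$ to split $M$ off as a direct summand of $\bigoplus_p \Ind_{S_p}^G M$, then invoke the $\cds$-version of Lemma \ref{induction}. The only differences are cosmetic: you make the Bezout identity $\sum_p a_p[G:S_p]=1$ explicit and then appeal to closure of $\cds_{kG}(f)$ under finite direct sums and summands, whereas the paper folds these steps together by inducing $M\oplus N_p$ directly and observing that $M$ is a summand of $\bigoplus_p \Ind_{S_p}^G(M\oplus N_p) \in \Theta_{kG}(f)$.
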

\begin{proof}
For every prime divisor $p$ of $|G/H|$, let $H<S_p<G$ be a subgroup such that $S_p/H$
is a $p$-Sylow subgroup of $G/H$, and let $N_p$ be a module which satisfies $(M\oplus N_p)\in\Theta_{kS_p}(f)$.
Then $Ind_{S_p}^G(M\oplus N_p)\in\Theta_{kG}(f)$ for every $p\mid |G/H|$. Since $S_p$ has finite index in $G$,
we have a natural map $i_p:M\rightarrow Ind_{S_p}^G(M)$ given by $m\mapsto \sum_{g\in G/S_p}{g\otimes g^{-1}m}$.
The composition of this map with the natural map $q_p:Ind_{S_p}^G(M)\rightarrow M$ given by $g\otimes m\mapsto g\cdot m$ is multiplication by $|G/S_p|$.
Since the numbers $|G/S_p|$ are coprime, we see that the map
\begin{equation}\oplus_p Ind_{S_p}^G(M)\stackrel{\oplus q_p}{\rightarrow} M\end{equation}
splits. It follows that $M$ is a direct summand of $\oplus_p Ind_{S_p}^G(M)$,
and therefore also of $\oplus_p Ind_{S_p}^G(M\oplus N_p)$.
The last module is in $\Theta_{kG}(f)$, as it is a finite direct sum of modules in $\Theta_{kG}(f)$.
We therefore have $M\in\cds_{kG}(f)$ as desired.\end{proof}
\end{section}

\begin{section}{Wall's construction}\label{wallconstruction}
The complex of Wall (see \cite{Wall}) enables one to construct a resolution
for the trivial module $\Z$ over a group $G$ by using a resolution for the same module
over a normal subgroup $N$ of $G$ and over the quotient $G/N$.
We use here a variant of Wall's construction. For the reader's convenience, we give here the details of the construction.

Let $S$ be a ring, and let
\begin{equation}C=\cdots\rightarrow M^n\stackrel{g^n}{\rightarrow}M^{n-1}\rightarrow\cdots\rightarrow M^0\rightarrow 0\end{equation}
be a (finite or infinite) complex of $S$-modules. For every $n$, let
\begin{equation}\cdots F^{n,i}\stackrel{d^{n,i}}{\rightarrow}F^{n,i-1}\rightarrow\cdots\rightarrow F^{n,0}\rightarrow M^n\end{equation}
be a projective resolution of $M^n$. The idea of Wall's construction is that we can build from the projective resolutions a complex
$T^*$ of projective modules together with a map of complexes $T^*\rightarrow C^*$ which will induce an isomorphism in homology.
More precisely, we claim the following:

\begin{theorem}\label{Wall}(Wall)
Let $S$, $C^*$ and $F^{n,*}$ be complexes as described above.
Consider the graded module $T^n = \oplus_{i+j=n}F^{i,j}$.
There are maps on $T$, $d^{i,j}_k:F^{i,j}\rightarrow F^{i-k,j+k-1}$ for $k=0,1,\ldots$, such that: \\
1. The maps $d^n:T^n\rightarrow T^{n-1}$ given by $d^n = \sum_{k,i+j=n}d^{i,j}_k$ satisfy $d^{n+1}d^n=0$ for every $n$.
They therefore make $T^*$ a complex. \\
2. The map $\pi_n:T^n\mapsonto F^{n,0}\rightarrow M^n$ is a map of complexes $\pi:T^*\rightarrow C^*$
which induces an isomorphism in homology.\end{theorem}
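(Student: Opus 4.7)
The plan is to construct the operators $d^{i,j}_k$ by induction on $k$, defining $d_k$ on every $F^{i,j}$ before moving to $k+1$. Expanding the desired identity $d^2 = 0$ for $d = \sum_k d_k$ yields the system
\begin{equation*}
\sum_{l+m=n} d_l \, d_m \;=\; 0 \qquad (n \geq 0),
\end{equation*}
so at stage $k$ the task is to produce $d_k$ satisfying $[d_0, d_k] = \phi_k$, where $\phi_k := -\sum_{l+m=k,\; l,m\geq 1} d_l \, d_m$ is built from previously constructed operators. Two ingredients make each step solvable: each column $F^{i,*} \twoheadrightarrow M^i \to 0$ is an exact augmented complex, and each $F^{i,j}$ is projective, so we can lift any map whose image lies in the kernel of the next vertical differential.

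For the base cases I would take $d_0^{i,j}$ to be the vertical differential of the resolution of $M^i$ (with $d_0^{i,0} = 0$), giving $d_0^2 = 0$ trivially. For $k = 1$, I would first lift $g^i \circ \epsilon^i$ through the surjection $\epsilon^{i-1}$ to define $d_1^{i,0}$, and then extend up each column to a chain map $d_1^{i,*}$ covering $g^i$ via the usual comparison theorem for projective resolutions; this forces $d_0 d_1 + d_1 d_0 = 0$. For the inductive step at $k \geq 2$, I would first verify algebraically, using the relations $\sum_{l+m=n} d_l d_m = 0$ for $n < k$, that $[d_0, \phi_k] = 0$; this cocycle identity is the main technical input, and keeping track of signs and indices is where I expect the most delicate bookkeeping. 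Once $\phi_k$ is known to commute past $d_0$ in the correct graded sense, induction on the vertical degree $j$ constructs each $d_k^{i,j}$: projectivity of $F^{i,j}$ together with exactness of the target column $F^{i-k,*}$ in positive degrees provides the required lift at every step.

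Finally, to see that the projection $\pi : T^* \to C^*$ is a quasi-isomorphism, I would filter by $F_p T^n := \bigoplus_{i \leq p,\; i+j=n} F^{i,j}$, a bounded filtration on each $T^n$ since $0 \leq i \leq n$. Only $d_0$ survives on the associated graded, so the $E_1$-page is concentrated in vertical degree $0$ and equals $M^i$ in column $i$, with induced $E_1$-differential $g^*$; hence $E_2 = H^*(C^*)$ and the spectral sequence collapses there. Since $\pi$ realises precisely this identification on $E_\infty$, it is a quasi-isomorphism. The principal obstacle in the whole argument is the verification that $\phi_k$ is a $d_0$-cocycle at each inductive stage: the lower-order relations among the previously constructed $d_l$ must combine in precisely the right way, and this sign-sensitive calculation is the heart of the proof.
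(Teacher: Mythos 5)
Your proposal is correct and takes essentially the same approach as the paper: the differentials $d_k$ are built by induction on $k$ using projectivity of the $F^{i,j}$ and exactness of the columns to solve each successive lifting problem, and the quasi-isomorphism claim is proved via the spectral sequence of the filtration by the first index. You are somewhat more explicit than the paper about the obstruction-theoretic structure of the inductive step (isolating the cocycle condition $[d_0,\phi_k]=0$ and flagging it as the point that needs checking), which is a reasonable way to organise the bookkeeping that the paper handles more tersely by simply invoking the Lifting Lemma to kill the next component of $d^2$; you are also more careful than the paper's phrasing in noting that the collapse happens at $E_2$ (after the induced differential $g^*$) rather than literally at the first page.
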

\begin{proof} We will construct the differentials $d^{i,j}_k$ by induction on $k$.
We begin with $k=0$. In this case we need to give differentials $d^{i,j}_0: F^{i,j}\rightarrow F^{i,j-1}$.
These differentials would just be the differentials of the complexes $F^{i,*}$.
Notice that if we would have stopped here, Part 1 of Theorem \ref{Wall} would have held,
but Part 2 would have not (unless all the maps in $C^*$ are trivial). So we need to consider also the maps in $C^*$.

Consider now the case $k=1$. Using the Lifting Lemma (see Chapter 1.7 of \cite{Brown}),
we can lift the maps $g^n:M^n\rightarrow M^{n-1}$ to maps of complexes $F^i\rightarrow F^{i-1}$
which we shall denote by $g^{i,j}: F^{i,j}\rightarrow F^{i-1,j}$ (we use the fact that the modules $F^{i,j}$ are projective in order to apply the Lifting Lemma).
We can now introduce on $F^{*,*}$ differentials of bidegree $(-1,0)$ ($F^{*,*}$ is a bimodule in the obvious way).
These would just be $d^{i,j}_1=(-1)^jg^{i,j}$. We add the sign in order to make the equation $d^{i-1,j}_0d^{i,j}_1 + d^{i,j-1}_1d^{i,j}_0=0$ hold.

So far we have constructed a diagram which looks like the following figure:\\
\xymatrix{\label{figure1}
 & & & F^{2,2}\ar[d]^{d^{2,2}_1}\ar[r]^{d^{2,2}_0} & F^{2,1}\ar[d]^{d^{2,1}_1}\ar[r]^{d^{2,1}_0} & F^{2,0} \ar[d]^{d^{2,0}_1}\ar[r]^{d^{2,0}_0} & M^2\ar[d]^{g^2} \\
 & & & F^{1,2}\ar[d]^{d^{1,2}_1}\ar[r]^{d^{1,2}_0} & F^{1,1}\ar[d]^{d^{1,1}_1}\ar[r]^{d^{1,1}_0} & F^{1,0} \ar[d]^{d^{1,0}_1}\ar[r]^{d^{1,0}_0} & M^1\ar[d]^{g^1} \\
 & & & F^{0,2}\ar[r]^{d^{0,2}_0} & F^{0,1}\ar[r]^{d^{0,1}_0} & F^{0,0} \ar[r]^{d^{0,0}_0} & M^0}

If $d^{i-1,j}_1d^{i,j}_1=0$, we could have taken $d^{i,j}=d^{i,j}_0+d^{i,j}_1$,
and the construction of the differentials of $T^*$ would have been completed.
The problem is that the equations $d^{i-1,j}_1d^{i,j}_1=0$ might not hold.
We can now continue in the following way:
we consider the maps $d^{i-1,j}_1d^{i,j}_1$ as chain maps which lift the zero map,
and we use the Lifting Lemma to get maps $d^{i,j}_2:F^{i,j}\rightarrow F^{i-2,j+1}$
which will make another component of $d^2$ equal to zero.
By induction, at stage $k$ we add in this way another component $d^{i,j}_k$
which will make another component of $d^2$ equal to zero.

As the number of modules in each diagonal is finite, the sum $d^{i,j}=\sum_{k}d^{i,j}_k$ is finite.
Moreover, our construction yields that $d^n=\sum_{i+j=n}d^{i,j}$ is a differential on $T^*$,
and so we have Part 1 of the theorem. It is also easy to see that the map $\pi$ is a map of complexes.
The last thing we need to check is that $\pi$ induces an isomorphism in homology.
For that, we consider the filtration on $T^*$ by rows.
That is, we define \begin{equation}(L^kT)^n = \oplus_{i+j=n; i\leq k}F^{i,j}\end{equation}
and we consider the spectral sequence associated to this filtration.
As the rows of $F^{i,j}$ are exact complexes, it is easy to see that this spectral sequence
collapses at the first page. This implies that $\pi$ induces an isomorphism in homology as desired. \end{proof}

\begin{remark}
The original setting of Wall's complex was the following:
suppose that we have a short exact sequence of groups
\begin{equation}1\rightarrow N\rightarrow G\rightarrow G/N\rightarrow 1.\end{equation}
We would like to construct a free resolution for $\Z$ over $\Z G$
by using a free resolution $F$ for $\Z$ over $\Z N$ and a free resolution
$C$ for $\Z$ over $\Z [G/N]$. By inducing $F$ to $G$ we get a free resolution
for $\Z [G/N]$ over $\Z G$. By taking direct sums of $Ind_N^G(F)$,
we get a free resolution for every $\Z G$-module which is the inflation
of a free $\Z [G/N]$-module, and thus, we have a free resolution for every
module which appears in $C$.
We can now apply the construction to get a resolution for $\Z$ over $\Z G$.\end{remark}

We would like now to apply this construction in order to
prove a closure property of complexity of modules. We claim the following:
\begin{proposition}
Let $0\rightarrow M_1\rightarrow M_2\rightarrow M_3\rightarrow 0$
be a short exact sequence of $kG$-modules, and let $f$ be a proper
complexity function.
If two of the modules in the sequence are in $\Theta_{kG}(f)$,
then so is the third\end{proposition}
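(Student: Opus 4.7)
The plan is to split into three cases according to which two of the three modules are assumed to lie in $\Theta_{kG}(f)$. In each case, I will construct an explicit projective resolution of the third module, either by applying Wall's construction (Theorem \ref{Wall}) to a suitable two-term complex built from the short exact sequence, or by invoking the classical horseshoe lemma. Throughout, the properness constants $c_1, c_2$ of $f$ will be used to absorb the shift between $f(n-1)$ and $f(n)$ up to a multiplicative constant.

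Suppose first that $M_1, M_2 \in \Theta_{kG}(f)$. I would view the inclusion as a two-term complex $C^*$ with $M^0 = M_2$ and $M^1 = M_1$; its zeroth homology is $M_3$ and its first homology vanishes. Applying Wall's construction to projective resolutions $F^{0,*} \to M_2$ and $F^{1,*} \to M_1$ of growth $\leq d_2 f$ and $\leq d_1 f$ respectively yields a projective complex $T^*$ with $T^n = F^{0,n} \oplus F^{1,n-1}$ that is quasi-isomorphic to $C^*$, i.e.\ a projective resolution of $M_3$. Properness gives $\rank_{kG}(T^n) \leq d_2 f(n) + d_1 f(n-1) \leq (d_2 + d_1/c_1) f(n)$, so $M_3 \in \Theta_{kG}(f)$.

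Next, suppose $M_2, M_3 \in \Theta_{kG}(f)$. Here I would apply Wall to the two-term complex with $M^0 = M_3$, $M^1 = M_2$, and differential the surjection $M_2 \twoheadrightarrow M_3$. The resulting Wall total complex $T^*$ satisfies $H_0(T) = 0$ and $H_1(T) = M_1$. To convert it into an honest projective resolution of $M_1$, I would set $Z = \ker(T^1 \to T^0)$; since $T^0 = F^{0,0}$ is projective, the short exact sequence $0 \to Z \to T^1 \to T^0 \to 0$ splits, so $Z$ is a direct summand of $T^1$ and hence projective, with rank bounded by $\rank_{kG}(T^1)$. The complex $\cdots \to T^3 \to T^2 \to Z \to M_1 \to 0$ is then a projective resolution of $M_1$, whose ranks are bounded by a constant multiple of $f(n)$, again using properness to compare $f(n+1)$ with $f(n)$.

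Finally, if $M_1, M_3 \in \Theta_{kG}(f)$, I would invoke the horseshoe lemma: starting from resolutions $P_1^* \to M_1$ and $P_3^* \to M_3$, one builds a projective resolution $P_2^* \to M_2$ with $P_2^n = P_1^n \oplus P_3^n$, of growth $\leq (d_1 + d_3) f$. The main obstacle is really the case where $M_1$ is the ``missing'' module: the Wall total complex for the surjection $M_2 \twoheadrightarrow M_3$ delivers the homology of $M_1$ one degree too high, and one must exploit the projectivity of $T^0$ to split off the unwanted bottom piece and land on a genuine resolution of $M_1$. Once this bookkeeping is handled, properness does the rest.
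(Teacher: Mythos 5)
Your proposal is correct and follows essentially the same route as the paper: the horseshoe lemma when $M_2$ is the missing module, and Wall's construction applied to the two-term complexes $M_1\to M_2$ and $M_2\to M_3$ in the other two cases, with the same splitting trick (your $Z=\ker(T^1\to T^0)$ is exactly the paper's complementary summand $T_1'$ in the decomposition $T^1\cong T_1'\oplus T^0$). The bookkeeping with the properness constants is also handled in the same way.
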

\begin{proof}
If $M_1$ and $M_3$ have resolutions with growth rate $\leq f$,
then the Horseshoe Lemma (see Lemma 2.2.8 in \cite{Weibel})
gives us a resolution of $M_2$ of growth rate $\leq f$.
If $M_1$ and $M_2$ have respective resolutions $P_1$ and $P_2$
of growth rate $\leq f$, we can consider the complex $M_1\rightarrow M_2$,
whose homology is $M_3$ in degree zero and 0 in all other degrees.
Using Wall's construction, we can construct a complex of projective modules
$T^*$ such that $T^n = P_2^n\oplus P_1^{n-1}$, and such that the homology of
$T^*$ is $M_3$ in degree zero and zero elsewhere. In other words, $T^*$ is a projective
resolution of $M_1$ whose growth rate is $\leq f$ (we need to use here the fact that $f$ is proper).
In a similar way, if $M_2$ and $M_3$ have complexities $\leq f$, we can consider the complex
$M_2\rightarrow M_3$. The homology of this complex is $M_1$ in degree 1 and zero elsewhere.
The corresponding Wall's complex would then be a complex of projective modules
$\cdots T_3\rightarrow T_2\rightarrow T_1\rightarrow T_0\rightarrow 0$ whose homology
is $M_1$ concentrated in degree 1. But this means that $T_1\rightarrow T_0$ is onto,
and since $T_0$ is projective, we have a decomposition $T_1 \cong T_1'\oplus T_0$.
In this way we get a complex $\cdots\rightarrow T_2\rightarrow T_1'\rightarrow M_1\rightarrow 0$
which is a projective resolution of $M_1$ of growth rate $\leq f$
(again- we need to use here the assumption that $f$ is a proper complexity function).\end{proof}
\end{section}

\begin{section}{The case of elementary abelian quotient}\label{elemab}
In this section we will prove the first induction result.
Let $k,G,M$ and $f$ be as in the previous sections.
We begin by recalling some facts about cohomology of finite groups and finite quotients.

Assume that $L<G$ is a normal subgroup of prime index $p$.
The group $G/L$ is a finite group of order $p$, and therefore we have a homomorphism
$G\rightarrow \Z_p$ with kernel $L$. This homomorphism corresponds to an element
$\zeta_L\in H^1(G,\Z_p)$, where the action of $G$ on $\Z_p$ is trivial.

By considering the connecting homomorphism $\delta$ which
corresponds to the short exact sequence of trivial $\Z G$-modules
\begin{equation}0\rightarrow \Z\stackrel{p}{\rightarrow}\Z\rightarrow\Z_p\rightarrow 0,\end{equation}
we get an element (the Bockstein) $\beta_L=\delta(\zeta_K)\in H^2(G,\Z)$.
By considering the description of cohomology classes as exact sequences,
it can be shown that the element $\beta_L$ corresponds to the exact sequence
\begin{equation}0\rightarrow \Z\stackrel{1\mapsto \sum_{i=0}^{p-1}x^iL}{\rightarrow}
\Z G/L\stackrel{L\mapsto (x-1)L}{\rightarrow }\Z G/L\stackrel{L\mapsto 1}{\rightarrow}\Z\rightarrow 0,\end{equation}
where $x$ is an element of $G$ such that $xL$ is a generator of $G/L$.
By tensoring the last sequence with $M$ over $\Z$, we get an exact sequence
\begin{equation}0\rightarrow M\rightarrow \Z G/L\otimes_{\Z} M\rightarrow\Z
G/L\otimes_{\Z} M\rightarrow M \rightarrow 0,\end{equation}
which corresponds to an element $\beta_L^M\in Ext^2_{kG}(M,M)$
(the sequence remains exact upon tensoring with $M$ since it splits over $\Z$).

Notice that we have a natural isomorphism
$Ind_L^G(M)\stackrel{\cong}{\rightarrow} \Z G/L\otimes_{\Z}M$ given by
$g\otimes m\mapsto gL\otimes g\cdot m$.
So the middle terms in the sequence which represent $\beta_L^M$ are isomorphic to
$Ind_L^G(M)$. Notice also that if $N$ is any $kL$-module,
then $\beta_L^M$ is cohomologically equivalent to the sequence
\begin{equation}\label{bocksteinit}0\rightarrow M\rightarrow Ind_L^G(N\oplus M)
\rightarrow Ind_L^G(N\oplus M)\rightarrow M\rightarrow 0\end{equation}
which is formed by taking the direct sum of the former sequence with the sequence
\begin{equation}0\rightarrow0\rightarrow Ind_L^G(N)\rightarrow Ind_L^G(N)
\rightarrow 0\rightarrow 0.\end{equation}
We will need to use the specific representation \ref{bocksteinit} of $\beta_L^M$.

We will now prove the main technical result of this paper.
In the next section we will use a theorem of Serre in order to apply this result to more concrete situations.
\begin{proposition}\label{resolutions main1}
Let $M$ be a $kG$-module, and let $f$ be a proper complexity function.
Assume that $G$ has normal subgroups $L_1,\ldots,L_m$ of index $p$ such that
$\beta_{L_1}^M\cdots\beta_{L_m}^M=0$ in $Ext^{2m}_{kG}(M,M)$.
If $M\in\cds_{kL_i}(f)$ for every $i$, then $M\in\cds_{kG}(f)$.\end{proposition}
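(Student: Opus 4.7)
My approach is to build a projective resolution of $M \oplus N$ (for some auxiliary $N$) of growth bounded by $f$ by applying Wall's construction to a complex built out of the $4$-term exact sequences that realize the Bocksteins.

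First I would set up the players. Since $M\in \cds_{kL_i}(f)$, choose for each $i$ a module $N_i$ with $M\oplus N_i\in \Theta_{kL_i}(f)$, and set $Q_i:=Ind_{L_i}^G(M\oplus N_i)$. Lemma \ref{induction} gives $Q_i\in \Theta_{kG}(f)$. Using the explicit representation of the Bockstein discussed just before the proposition (namely \ref{bocksteinit} with $N=N_i$), the class $\beta_{L_i}^M\in Ext^2_{kG}(M,M)$ is realized by the exact sequence $\epsilon_i\colon 0\to M\to Q_i\to Q_i\to M\to 0$. Splicing the $\epsilon_i$ at the common outer $M$'s produces a $2m$-fold extension
$$E\colon \quad 0\to M\to T_{2m-1}\to T_{2m-2}\to\cdots\to T_0\to M\to 0,$$
whose $2m$ middle terms $T_j$ are copies of the $Q_i$ and therefore all belong to $\Theta_{kG}(f)$. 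By definition of Yoneda product, the class of $E$ in $Ext^{2m}_{kG}(M,M)$ equals $\beta_{L_1}^M\cdots\beta_{L_m}^M$, which vanishes by hypothesis.

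The main step is then to use the vanishing of $[E]$ to extract a finite-width exact complex resolving a summand that contains $M$. Because $[E]=0$ in $Ext^{2m}$, the extension $E$ is Yoneda-equivalent to a split $2m$-extension of $M$ by itself. Chasing this equivalence, one extracts a chain-level map, after possibly augmenting the $T_j$ by projective summands (all still lying in $\Theta_{kG}(f)$), which splits the inclusion $M\hookrightarrow T_{2m-1}$ and the projection $T_0\twoheadrightarrow M$ compatibly. Using this splitting I would then glue $E$ to a copy of itself along the two outer $M$'s: the fact that the closing-up map is trivial in $Ext^{2m}$ is exactly what is needed for this gluing to close after finitely many iterations on a suitable direct summand. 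What one ends up with is an exact complex
$$C_*\colon\quad 0\to C_L\to C_{L-1}\to\cdots\to C_0\to M\oplus N\to 0,$$
of finite length $L$, each $C_j\in\Theta_{kG}(f)$, resolving $M\oplus N$ for some $N$.

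Once $C_*$ is in hand, Wall's construction (Theorem \ref{Wall}) applied to it, together with chosen projective resolutions of the finitely many $C_j$ of growth $\leq f$, yields a genuine projective resolution of $M\oplus N$. The total complex in degree $n$ is $\bigoplus_{i+j=n,\,0\le i\le L}F^{i,j}$, so its rank is at most $\sum_{i=0}^{L}df(n-i)$; here properness of $f$ (the bounded ratios $f(n+1)/f(n)$) converts this finite sum into a bound $\leq c\,f(n)$. Thus $M\oplus N\in \Theta_{kG}(f)$, i.e.\ $M\in \cds_{kG}(f)$. The main obstacle I anticipate is the middle step: turning the cohomological vanishing $[E]=0$ into a concrete chain-level closing-up with finite $L$, rather than the unbounded infinite splice one gets with no hypothesis on the Yoneda product (which would only produce the much worse growth $\sum_{j\le n}f(j)$).
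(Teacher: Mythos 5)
Your proposal assembles the right ingredients: for each $i$ you choose $N_i$ with $M\oplus N_i\in\Theta_{kL_i}(f)$, induce to $kG$, realize $\beta_{L_1}^M\cdots\beta_{L_m}^M$ as the $2m$-fold Yoneda splice $E$ with middle terms $Q_i = Ind_{L_i}^G(M\oplus N_i)$, and plan to feed the result into Wall's construction, using properness of $f$ to absorb the finitely many shifted summands. But the ``main step'' you flag yourself as the obstacle --- extracting from $[E]=0$ a \emph{finite}-length exact complex $C_*$ with terms in $\Theta_{kG}(f)$ resolving $M\oplus N$, \emph{before} applying Wall --- is precisely where the argument breaks. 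The sketch you give (augment the $T_j$ by projective summands, splice $E$ with itself and hope the vanishing ``closes up'' the splice after finitely many iterations) does not produce such a $C_*$: a Yoneda congruence chain between $E$ and the split $2m$-extension gives no control over the modules appearing in the intermediate extensions, and iterated self-splices of $E$ only encode $\beta,\beta^2,\beta^3,\ldots$ without ever bounding the length of the resulting complex. There is no direct chain-level ``finite closing-up'' to extract from $[E]=0$ alone.

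The paper's proof inverts your order of operations, and this is the point you are missing. One first deletes the two outer copies of $M$ from $E$ to obtain a complex $C^*$ of length $2m$ with $H_0(C^*)\cong H_{2m-1}(C^*)\cong M$ and $H_i(C^*)=0$ otherwise, and applies Wall's construction to $C^*$ (using the induced resolutions $F_i^*$ of the $Q_i$), producing a projective complex $P^*$ together with a quasi-isomorphism $P^*\to C^*$. The key observation is that this $P^*$ does double duty. On one hand, for $l\geq 2m-1$ it has the explicit form $P^l = F_1^l\oplus F_1^{l-1}\oplus\cdots\oplus F_m^{l-2m+1}$, whence by properness of $f$ its growth is $\leq f$. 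On the other hand, since $H_i(C^*)=0$ for $0<i<2m-1$, the truncation $P^{2m-1}\to\cdots\to P^0\to M$ is a genuine partial \emph{projective} resolution of $M$, so $Ext^{2m}_{kG}(M,M)$ can be computed from it as $Hom(Z^{2m-1},M)/\mathrm{im}\,Hom(P^{2m-1},M)$. Through this identification $\beta$ is the class of $Z^{2m-1}\twoheadrightarrow Z^{2m-1}/B^{2m-1}\cong M$; vanishing of $\beta$ says exactly that this map extends over $P^{2m-1}$, which splits the exact sequence $0\to M\to P^{2m-1}/B^{2m-1}\to B^{2m-2}\to 0$, so $P^{2m-1}/B^{2m-1}\cong M\oplus N$ with $N=B^{2m-2}$. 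Since $H_i(P^*)=0$ for $i\geq 2m$, the complex $\cdots\to P^{2m}\to P^{2m-1}\to M\oplus N\to 0$ is then a projective resolution of $M\oplus N$ of growth $\leq f$. In short, Wall's construction must be applied \emph{first}, to the $2m$-term complex with the leftover homology $M$ in top degree, precisely because the resulting Wall complex is the partial projective resolution of controlled growth needed to convert $[E]=0$ into a chain-level splitting; your proposal tries to do the splitting first, without any such resolution in hand, and the step you identify as an obstacle remains unproved.
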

\begin{proof}
We shall do the following: we shall represent $\beta_{L_1}^M\cdots\beta_{L_m}^M$
as an exact sequence, and than we shall use resolutions over the subgroups
$L_1,\ldots L_m$ and Wall's construction in order to create a projective complex of
growth rate $\leq f$ over this sequence. We than use the fact that the product of the
Bocksteins is zero in cohomology in order to derive a projective resolution of growth rate
$\leq f$ for $M\oplus N$, where $N$ is a module which will be described in the sequel.

By assumption, for each $i$ we have a $kL_i$-module $N_i$ and a projective resolution
\begin{equation}\cdots\rightarrow \widehat{F_i^1}\rightarrow
\widehat{F_i^0}\rightarrow M\oplus N_i\rightarrow 0\end{equation}
of growth rate $\leq f$. By inducing this sequence to $kG$ we get,
as in Lemma \ref{induction}, a projective resolution of $Ind_{L_i}^G(M\oplus N_i)$ of growth rate $\leq f$.

We denote the module $Ind_{L_i}^G(\widehat{F_i^n})$ by $F_i^n$.
By the discussion at the beginning of this section,
and by the fact that the cup product in cohomology corresponds to concatenation of
exact sequences, we see that the cohomology class of $\beta_{L_1}^M\cdots\beta_{L_m}^M$
can be represented by an exact sequence of the form \begin{equation}0\rightarrow M\rightarrow
Ind_{L_m}^G(M\oplus N_m)\rightarrow Ind_{L_m}^G(M\oplus N_m)\rightarrow\cdots\rightarrow\end{equation}
\begin{equation*}\rightarrow Ind_{L_1}^G(M\oplus N_1)\rightarrow
Ind_{L_1}^G(M\oplus N_1)\rightarrow M\rightarrow 0.\end{equation*}

Let $C^*$ be the complex obtained from this sequence by deleting the two copies of $M$
at the beginning and at the end. Thus, the zeroth homology group of $C^*$ is $M$,
and the $(2m-1)$-st homology group of $C^*$ is also $M$. All other homology groups of $C^*$ are trivial.

Every module in $C^*$ is of the form $Ind_{L_i}^G(M\oplus N_i)$ for some $i$,
and so for every module in $C^*$ we have a projective resolution of growth rate $\leq f$.
We can use Wall's construction in order to construct from these resolutions a complex $P^*$
together with a map $\pi:P^*\rightarrow C^*$ which induces isomorphism in homology.
For $l\geq 2m-1$, the module $P^{l}$ is the direct sum
\begin{equation}P^l=F_1^{l}\oplus F_1^{l-1}\oplus\ldots\oplus F_m^{l-2m+2}\oplus F_m^{l-2m+1}.\end{equation}

By considering the rank of the constituents of $P^{l}$, we see that the growth rate of $P^*$
is $\leq f$ (we use here the fact that $f$ is a proper complexity function.
We shall give after the proof a bound for the number of generators
in $P^*$ and in the resolution we will create from $P^*$).

We know that the product $\beta=\beta_{L_1}^M\cdots\beta_{L_m}^M$ is zero in $Ext^{2m}_{kG}(M,M)$.
We can interpret this fact in the following way: let us denote the kernel of
$P^{2m-1}\rightarrow P^{2m-2}$ by $Z^{2m-1}$, and the image of
$P^{2m}\rightarrow P^{2m-1}$ by $B^{2m-1}$. The map $\pi^{2m-1}:P^{2m-1}\rightarrow C^{2m-1}$
sends $Z^{2m-1}$ onto the image of $M\rightarrow C^{2m-1}$. Since
$\pi$ induces isomorphism in homology, the kernel of $res(\pi)|_{Z^{2m-1}}:Z^{2m-1}\rightarrow M$
is $B^{2m-1}$ and it induces an isomorphism $Z^{2m-1}/B^{2m-1}\cong M$.

Now, up to the $2m$-th term, $P^*$ is a projective resolution for $M$.
Therefore, the group $Ext^{2m}_{kG}(M,M)$ can be identified with the quotient of the abelian group
$Hom_{kG}(Z^{2m-1},M)$ by the subgroup which is the image of the restriction map $Hom_{kG}(P^{2m-1},M)\rightarrow Hom_{kG}(Z^{2m-1},M)$.
Via this identification, the cohomology class of $\beta$ is the class of the map $Z^{2m-1}\rightarrow Z^{2m-1}/B^{2m-1}\cong M$.
But since $\beta=0$, this means that the map $Z^{2m-1}\rightarrow M$ can be extended to a map
$P^{2m-1}\rightarrow M$, which implies that $P^{2m-1}/B^{2m-1}$ splits as $M\oplus N$,
where $N=P^{2m-1}/Z^{2m-1} = B^{2m-2}$.

This means that we have a resolution for $M\oplus N$ given by
\begin{equation}\cdots\rightarrow P^{2m}\rightarrow P^{2m-1}\rightarrow M\oplus N\rightarrow 0.\end{equation}
By the assumption that $f$ is a proper complexity function,
it is easy to see that the growth rate of this resolution is $\leq f$ as desired.\end{proof}
\begin{remark} In case the complexity function $f$ is exponential, the proposition is true
even without the assumption on the vanishing product in cohomology.
This is due to the following reason: if $G$ has a normal finite index subgroup $L$ of index $p$
then $G/L$ is cyclic and has a periodic resolution $C^*$. By tensoring $C^*$
with $M$, we get a resolution of $M$ by modules of the form $\Z G/L\otimes M \cong Ind_L^G(M)$.
If $M$ has a projective resolution $P^*$ over $L$ of growth rate $\leq f$, then
by Wall's construction, we get a resolution for $M$ over $kG$. The fact that the growth rate
of this resolution is $f$ again follows from the fact that if $f(n)=a^n$ for some $a>1$,
then there is a scalar $c>0$ such that $f(1)+f(2)+\ldots +f(n) < cf(n)$.
\end{remark}

Notice that the direct summand $N$ in the proof is actually the $2m-2$ syzygy of $M$.
Notice also that this construction gives us not only the asymptotic behavior of the
growth rate of the resolution, but also the explicit resolution.
If we denote the rank of $\widehat{F_i^n}$ by $d_i^n$, we see that for $l\geq 2m-1$
the rank of $P^l$ is (bounded by) $d_1^l + d_1^{l-1} + ... d_m^{l-2m+2} + d_m^{l-2m+1}$.
Therefore, the rank of the $n$-th term of our resolution (which is $P^{2m+n-1}$)
will be (bounded by)
$d_1^{n+2m-1} + d_1^{n+2m-2} + ... d_m^{n+1} + d_m^n$. Of course,
there might be a resolution for $M$ or for $M\oplus N$ with less generators.
\end{section}

\begin{section}{Consequences of proposition \ref{resolutions main1}}\label{consequences}
We would like now to prove our main result, using Proposition \ref{resolutions main1}.
We first recall the following theorem of Serre (see Theorem 6.4.1 in \cite{Evens})
\begin{theorem}(Serre) Let $G$ be a finite $p$-group which is not elementary abelian.
Then there are subgroups $L_1,\ldots,L_m$ of index $p$ in $G$ such that $\beta_{L_1}\cdots\beta_{L_m}=0$.\end{theorem}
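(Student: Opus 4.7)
The plan is to prove Serre's theorem by induction on the order of $G$. The key reduction goes through the Frattini subgroup $\Phi(G)=G^p[G,G]$: since $G$ is not elementary abelian we have $\Phi(G)\neq 1$, and a non-trivial normal subgroup of a $p$-group meets the centre non-trivially, so one can choose a central element $z\in\Phi(G)$ of order $p$. Set $\bar G=G/\langle z\rangle$.

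The decisive property of $z$ is that every homomorphism $G\to\Z_p$ annihilates $\Phi(G)$ and hence $z$, so inflation along $G\twoheadrightarrow\bar G$ induces a bijection between the index-$p$ subgroups of $\bar G$ and those of $G$, and identifies the corresponding classes $\zeta_L\in H^1$ and Bocksteins $\beta_L\in H^2$. Consequently, if $\bar G$ is still non-elementary-abelian the inductive hypothesis furnishes $\bar L_1,\ldots,\bar L_m$ of index $p$ in $\bar G$ with $\beta_{\bar L_1}\cdots\beta_{\bar L_m}=0$; inflating, the preimages $L_i\subset G$ satisfy the required identity. So the whole weight of the proof falls on the base case of the induction, namely the case where $\bar G$ is elementary abelian and $G$ itself is a central extension $1\to\langle z\rangle\to G\to\bar G\to 1$.

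In that remaining case the strategy is cohomological. Let $\alpha\in H^2(\bar G,\Z_p)$ be the extension class; by construction inflation sends $\alpha$ to $0$ in $H^2(G,\Z_p)$, so any element of $H^*(\bar G,\Z_p)$ lying in the principal ideal $(\alpha)$ inflates to zero in $H^*(G,\Z_p)$. The task is therefore to exhibit, inside the explicitly known ring $H^*(\bar G,\Z_p)$ (a tensor product of a polynomial and an exterior algebra on $\dim\bar G$ generators), a non-trivial product of Bocksteins $\beta\bar\zeta_1\cdots\beta\bar\zeta_m$ which is divisible by $\alpha$. Once such an identity is produced, inflating kills the right-hand side and gives the desired vanishing in $H^{2m}(G,\Z_p)$, from which the subgroups $L_i\subset G$ are read off as the preimages of the $\bar L_i=\ker\bar\zeta_i$.

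The main obstacle is the production of this divisibility identity. It is not formal: one has to use Steenrod operations ($P^1$ for $p$ odd, $\mathrm{Sq}^1$ for $p=2$) acting on $H^*(\bar G,\Z_p)$, together with the fact that $\alpha$ itself, being the class of an extension by $\Z_p$, has its Bockstein expressible in terms of the $\beta\bar\zeta_i$'s via the commutator- and $p$-th-power-data that $\alpha$ encodes. I would model this step on the classical Serre/Kroll computation in the cohomology of elementary abelian groups, where the product of Bocksteins of all nonzero linear forms is a power of a Dickson-type invariant and can be shown, by a Steenrod power argument, to lie in $(\alpha)$ whenever $\alpha\neq 0$, i.e.\ whenever the extension is non-trivial — which is exactly the hypothesis that $G$ is not itself elementary abelian.
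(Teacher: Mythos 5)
The paper does not actually prove Serre's theorem; it simply cites it as Theorem~6.4.1 of Evens' book. So there is no in-paper proof to compare against, and I will assess your outline on its own merits.

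Your inductive reduction is correct and is exactly the standard one: since $G$ is not elementary abelian, $\Phi(G)=G^p[G,G]\neq 1$, so one can pick a central $z\in\Phi(G)$ of order $p$; every homomorphism $G\to\Z_p$ kills $\Phi(G)$, so inflation $H^1(\bar G,\Z_p)\to H^1(G,\Z_p)$ is an isomorphism, identifying index-$p$ subgroups and their Bocksteins, and the induction closes unless $\bar G$ is elementary abelian. That reduction, to the case of a non-split central extension $1\to\Z_p\to G\to V\to 1$ with $V$ elementary abelian, is sound.

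The gap is in the base case, and it is not merely a matter of filling in a known computation: the statement you are trying to establish there is actually false as you have phrased it. You want a product of Bocksteins in $H^*(V,\Z_p)$ that is ``divisible by $\alpha$'', i.e.\ lies in the principal ideal $(\alpha)$. That does not hold in general. Take $p$ odd, $V=(\Z_p)^2$, and $\alpha=x_1x_2$ purely exterior (so $G$ is the extraspecial group of order $p^3$ and exponent $p$). Then $(\alpha,\beta\alpha)\cap \Z_p[y_1,y_2]=0$: no non-zero polynomial in the $y_i$, and in particular no product of Bocksteins of non-zero linear forms, lies in $(\alpha)$ or even in $(\alpha,\beta\alpha)$. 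What is true is that the Dickson-type product $D=\prod_{\ell}\beta\ell=y_1^{p}y_2-y_1y_2^{p}$ equals $\beta P^1\beta\,\alpha$; it lies in the kernel of inflation because inflation commutes with the Steenrod operations and kills $\alpha$, but it is not in $(\alpha)$. So the target ideal you should be aiming for is not $(\alpha)$ but the kernel of the inflation map, which contains the closure of $\alpha$ under the action of the mod-$p$ Steenrod algebra and the Bockstein. The heart of Serre's theorem is precisely the nontrivial computation that some product of Bocksteins of linear forms falls into that larger ideal for every nonzero $\alpha\in H^2(V,\Z_p)$; your proposal replaces this by a divisibility claim that fails, and then only gestures at a ``Serre/Kroll'' computation without carrying it out. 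Until that lemma is stated correctly and proved, the argument is incomplete.
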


\begin{proposition}\label{main2}Let $k$ be a ring, let $G$ be a group, and let $H$ be a normal subgroup of $G$
of index $p^l$ for some $l$. Let $M$ be a $kG$-module, and let $f$ be a complexity function.
Assume that for every subgroup $H<E<G$ for which $E/H$ is elementary abelian, $M\in\cds_{kE}(f)$.
Then $M\in\cds_{kG}(f)$.\end{proposition}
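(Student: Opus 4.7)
The plan is to induct on $l$, where $[G:H]=p^l$. Write $Q:=G/H$. The base case is when $Q$ is elementary abelian (covering in particular $l=0,1$): then $G$ itself is an admissible choice of $E$ in the hypothesis, so $M\in\cds_{kG}(f)$ follows at once.

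For the inductive step, assume $Q$ is not elementary abelian. I would invoke the theorem of Serre recalled just above to produce subgroups $\bar L_1,\ldots,\bar L_m$ of $Q$ of index $p$ with $\beta_{\bar L_1}\cdots\beta_{\bar L_m}=0$ in $H^{2m}(Q,\Z)$. Let $L_i\triangleleft G$ be the preimage of $\bar L_i$, so each $L_i$ is a normal index-$p$ subgroup of $G$ containing $H$, with $[L_i:H]=p^{l-1}$. Any subgroup $H<E<L_i$ with $E/H$ elementary abelian in $L_i/H$ is equally such a subgroup of $Q$, so the standing hypothesis gives $M\in\cds_{kE}(f)$. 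Applying the inductive hypothesis to the pair $(L_i,H)$, I conclude $M\in\cds_{kL_i}(f)$ for every $i$.

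All that remains is to verify $\beta_{L_1}^M\cdots\beta_{L_m}^M=0$ in $Ext^{2m}_{kG}(M,M)$, since Proposition~\ref{resolutions main1} applied to $L_1,\ldots,L_m$ then yields $M\in\cds_{kG}(f)$ and closes the induction. I would obtain this vanishing from the factorisation
$$\beta_{\bar L_i}\ \stackrel{\mathrm{inf}}{\longmapsto}\ \beta_{L_i}\ \stackrel{-\otimes_{\Z}M}{\longmapsto}\ \beta_{L_i}^M,$$
where the first step uses that $L_i$ is the kernel of $G\twoheadrightarrow Q\twoheadrightarrow Q/\bar L_i$ (so $\zeta_{L_i}$ is the inflation of $\zeta_{\bar L_i}$ and the Bockstein commutes with inflation), and the second step uses the explicit four-term representative of $\beta_{L_i}^M$ recalled in Section~\ref{elemab}. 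Both arrows are ring homomorphisms for the cup/Yoneda product, the second being well defined on these particular classes because their standard representatives have middle terms $\Z G/L_i$, which are $\Z$-free, so concatenating them and tensoring over $\Z$ with $M$ preserves exactness. Serre's vanishing in $H^{2m}(Q,\Z)$ therefore propagates to the claimed identity in $Ext^{2m}_{kG}(M,M)$.

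The main obstacle is precisely this last compatibility: one must check carefully that ``tensoring with $M$ over $\Z$'' yields a multiplicative map $H^*(G,\Z)\to Ext^*_{kG}(M,M)$ on the classes produced by Serre, even though $M$ need not be $\Z$-flat. The $\Z$-freeness of the representatives coming from the sequences displayed at the start of Section~\ref{elemab} is what rescues the construction; once it is in hand the rest is bookkeeping: Serre selects the $L_i$, induction handles each $L_i$, and Proposition~\ref{resolutions main1} closes the loop.
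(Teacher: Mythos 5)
Your argument is correct and matches the paper's proof essentially step for step: induction on the index of $H$ in $G$, Serre's theorem applied to the $p$-group $G/H$ to produce the index-$p$ subgroups $L_i$, the compatibility $\beta_{L_i}=\mathrm{inf}_{G/H}^G(\beta_{\bar L_i})$, tensoring with $M$ to transfer the vanishing product to $Ext^{2m}_{kG}(M,M)$, and finally Proposition~\ref{resolutions main1}. You are somewhat more explicit than the paper about why ``$-\otimes_{\Z}M$'' is legitimate on these classes (the $\Z$-freeness of the permutation modules $\Z[G/L_i]$), a point the paper passes over with ``this implies, by tensoring with $M$.''
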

\begin{proof} We argue by induction on subgroups of $G$ which contain $H$.
If $G/H$ is elementary abelian, there is nothing to prove. Otherwise,
suppose that the result is true for every subgroup $H<L<G$ of index $p$ in $G$.
Since elementary abelian subgroups of $L/H$ are also elementary abelian subgroups of $G/H$,
we have by induction that $M\in\cds_{kL}(f)$ for every such subgroup $L$.
By Serre's Theorem, there are subgroups $L_1,\ldots,L_m$ of index $p$ such that
$\beta_{L_1}\cdots\beta_{L_m}=0$ (Just consider the non elementary abelian finite group $G/H$
and the fact that $\beta_L$ is $inf_{G/H}^G(\beta_{L/H})$).
This implies, by tensoring with $M$, that $\beta_{L_1}^M\cdots\beta_{L_m}^M=0$.
We can thus apply proposition \ref{resolutions main1} and conclude that $M\in\cds_{kG}(f)$.\end{proof}

In order to apply this to arbitrary finite quotients, we use Lemma \ref{psylow}:
\begin{proposition}\label{main3} Let $G$ be a group, $H$ a normal subgroup of finite index.
Let $M$ be a $kG$-module, and let $f$ be a proper complexity function.
Assume that for every subgroup $H<E<G$ for which $E/H$ is elementary abelian,
$M\in\cds_{kE}(f)$. Then $M\in\cds_{kG}(f)$.\end{proposition}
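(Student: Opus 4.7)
The plan is to combine Proposition \ref{main2} (which handles $p$-group quotients) with Lemma \ref{psylow} (which reduces arbitrary finite quotients to their Sylow subgroups).

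First, for each prime $p$ dividing $[G:H]$, I would pick an intermediate subgroup $H<S_p<G$ such that $S_p/H$ is a $p$-Sylow subgroup of the finite group $G/H$. Since $H$ is normal in $G$, it is automatically normal in $S_p$, and $[S_p:H]=p^{l_p}$ for some $l_p\geq 0$, so Proposition \ref{main2} is applicable to the pair $(S_p,H)$ in place of $(G,H)$ — provided its hypothesis is verified.

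Next I would check that hypothesis. Let $H<E<S_p$ be any subgroup with $E/H$ elementary abelian. Then $E$ is also a subgroup of $G$ with $H<E<G$ and $E/H$ elementary abelian, so by the assumption of the proposition we have $M\in\cds_{kE}(f)$. Thus Proposition \ref{main2} yields $M\in\cds_{kS_p}(f)$ for every prime divisor $p$ of $[G:H]$.

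Finally, Lemma \ref{psylow} applied to $H\normal G$ gives $M\in\cds_{kG}(f)$, which is the desired conclusion. There is no real obstacle here; the proof is essentially a two-line assembly once the previous results are in place. The only thing to verify carefully is the harmless observation that an elementary abelian subgroup of $S_p/H$ (which is automatically $p$-elementary abelian, as $S_p/H$ is a $p$-group) is still an elementary abelian subgroup of $G/H$, so that the global hypothesis of Proposition \ref{main3} feeds directly into the local hypothesis of Proposition \ref{main2}.
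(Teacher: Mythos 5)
Your proof is correct and follows exactly the same route as the paper's: pass to Sylow subgroups $S_p$ over $H$, apply Proposition~\ref{main2} to each pair $(S_p,H)$ after noting that elementary abelian subgroups of $S_p/H$ are elementary abelian subgroups of $G/H$, and then glue with Lemma~\ref{psylow}. The extra hypothesis checks you spell out are implicit in the paper's terser statement but add nothing new.
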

\begin{proof}
We already know that the proposition is true in case $G/H$ is a $p$-group.
For every prime number $p$ which divides $|G/H|$, let $H<S_p<G$ be a subgroup of
$G$ such that $S_p/H$ is a $p$-Sylow subgroup of $G/H$.
Using Proposition \ref{main2} together with the assumption,
we see that $M\in\cds_{kS_p}(f)$ for every $p$. Using Lemma \ref{psylow},
we conclude that $M\in\cds_{kG}(f)$.\end{proof}
If $p$ is a prime number which has an inverse in $k$,
we do not need to consider quotients which are $p$-groups. More precisely:
\begin{lemma} Assume that $|G/H|$ is invertible in $k$.
If $M$ is a $kG$-module such that $M\in\cds_{kH}(f)$ then $M\in\cds_{kG}(f)$.\end{lemma}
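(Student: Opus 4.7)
The plan is to mimic the proof of Lemma \ref{psylow}, but replace the coprimality argument on Sylow indices with the direct invertibility of $|G/H|$, so that a single induced module suffices instead of a sum over primes. Note first that for $|G/H|$ to be invertible in $k$ as an integer, $G/H$ must be finite, so all the transfer-type maps that appeared in the proof of Lemma \ref{psylow} are well defined.

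First I would unwind the hypothesis: there exists a $kH$-module $N$ such that $M\oplus N\in\Theta_{kH}(f)$. By Lemma \ref{induction} (or rather its remark for $\cds$, though here we get the cleaner statement directly), the induced module $\Ind_H^G(M\oplus N)=\Ind_H^G(M)\oplus\Ind_H^G(N)$ lies in $\Theta_{kG}(f)$, since inducing a projective resolution of growth rate $\le f$ over $kH$ yields a projective resolution of the same growth rate over $kG$.

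Next I would exhibit $M$ as a direct summand of $\Ind_H^G(M)$, following exactly the transfer/counit argument used in the proof of Lemma \ref{psylow}. Define $i\colon M\to \Ind_H^G(M)$ by $i(m)=\sum_{gH\in G/H}g\otimes g^{-1}m$ (a well-defined $kG$-map since $G/H$ is finite and the sum is independent of coset representatives) and $q\colon \Ind_H^G(M)\to M$ by $q(g\otimes m)=gm$. Then $q\circ i=|G/H|\cdot\id_M$, and since $|G/H|$ is invertible in $k$, the map $|G/H|^{-1}q$ provides a retraction of $i$. Hence $M$ is a $kG$-direct summand of $\Ind_H^G(M)$, and in particular of $\Ind_H^G(M\oplus N)$.

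Finally I would combine the two steps: $M$ is a direct summand of a module in $\Theta_{kG}(f)$, so by definition of $\cds_{kG}(f)$ we have $M\in\cds_{kG}(f)$. There is really no hard step here; the whole point of the statement is that invertibility of $|G/H|$ collapses the Sylow-by-Sylow bookkeeping in Lemma \ref{psylow} to a single induction, so the argument is considerably shorter than the earlier one and requires no appeal to Serre's theorem or Wall's complex.
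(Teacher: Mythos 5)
Your proof is correct and is essentially the same as the paper's: both induce a resolution of $M\oplus N$ from $kH$ to $kG$ via Lemma \ref{induction}, and both exhibit $M$ as a direct summand of $\Ind_H^G(M)$ by splitting the counit map $g\otimes m\mapsto gm$ with $m\mapsto \frac{1}{|G/H|}\sum_{g\in G/H}g\otimes g^{-1}m$, using invertibility of $|G/H|$ in $k$. You have merely spelled out the steps more explicitly than the paper's terse version.
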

\begin{proof}
This follows from the fact that in case $|G/H|$ is invertible in $k$,
the natural (onto) map \begin{equation}Ind_H^G(M)\rightarrow M\end{equation}
\begin{equation*}g\otimes m\mapsto g\cdot m\end{equation*} splits by the map
\begin{equation}m\mapsto \frac{1}{|G/H|}\sum_{g\in G/H}g\otimes g^{-1}\cdot m.\end{equation}
By Lemma \ref{induction} we see that $M\in\cds_{kG}(f)$.\end{proof}
Proposition \ref{main3} together with the lemma above implies the following
\begin{corollary}\label{main4} Let $G$ be a group, $H$ a normal subgroup of finite index.
Let $M$ be a $kG$-module, and let $f$ be a proper complexity function.
Assume that for every subgroup $H<E<G$ for which $E/H$ is $p$-elementary abelian,
where $p$ is a prime number which is not invertible in $k$, we have $M\in\cds_{kE}(f)$.
Then $M\in\cds_{kG}(f)$.\end{corollary}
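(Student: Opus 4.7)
The plan is to reduce to Proposition \ref{main3} by separating the prime divisors of $|G/H|$ into those invertible in $k$ (handled by the preceding lemma) and those not invertible (handled by Proposition \ref{main3}), and then to assemble the pieces via Lemma \ref{psylow}.

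Concretely, I would first observe that taking $E=H$ in the hypothesis yields $M\in\cds_{kH}(f)$, since the trivial quotient $H/H$ is $p$-elementary abelian for every prime. For each prime divisor $p$ of $|G/H|$ I would then fix a subgroup $H<S_p<G$ such that $S_p/H$ is a $p$-Sylow subgroup of $G/H$, and establish $M\in\cds_{kS_p}(f)$ in each of the following two cases. When $p$ is invertible in $k$, so is $|S_p/H|$, a power of $p$; the lemma immediately preceding the corollary, applied with $S_p$ in place of $G$, promotes $M\in\cds_{kH}(f)$ to $M\in\cds_{kS_p}(f)$. When $p$ is not invertible in $k$, every elementary abelian subgroup $E/H$ of the $p$-group $S_p/H$ is automatically $p$-elementary abelian and sits inside $G/H$, so the hypothesis supplies $M\in\cds_{kE}(f)$ for every $E$ in the range relevant to Proposition \ref{main3} applied to $(S_p,H)$; invoking that proposition gives $M\in\cds_{kS_p}(f)$.

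Once $M\in\cds_{kS_p}(f)$ is known for a Sylow subgroup $S_p/H$ attached to every prime divisor $p$ of $|G/H|$, Lemma \ref{psylow} immediately delivers $M\in\cds_{kG}(f)$. I do not anticipate any serious obstacle: the argument is a routine case split on whether $p$ is invertible in $k$, with each branch already handled by a previously proved result. The only delicate point is the implicit use of $E=H$ in the hypothesis (so that $M\in\cds_{kH}(f)$ is available), which is what allows the preceding invertibility lemma to be applied on the primes invertible in $k$.
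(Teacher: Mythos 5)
Your proof is correct and uses exactly the ingredients the paper cites (Proposition~\ref{main3}, the lemma on invertible index, and Lemma~\ref{psylow}); the observation that $E=H$ supplies $M\in\cds_{kH}(f)$ is indeed the small but necessary point. The only remark worth making is that the detour through Lemma~\ref{psylow} is slightly redundant: since Proposition~\ref{main3} already performs the Sylow reduction internally, a marginally more direct route is to verify its hypothesis once for $(G,H)$ directly --- for a prime $p$ invertible in $k$ and any $E$ with $E/H$ $p$-elementary abelian, $|E/H|$ is a power of $p$, hence invertible, so the preceding lemma applied to $(E,H)$ promotes $M\in\cds_{kH}(f)$ to $M\in\cds_{kE}(f)$ --- and then invoke Proposition~\ref{main3} a single time. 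Both assemblies are valid and amount to the same argument.
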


Consider now the special case where $M,G$ and $H$ are as before, and $M$ is projective
over every subgroup $H<E<G$ such that $E/H$ is elementary abelian.
We do not have here a proper complexity function, but it is easy to see that
by applying the same arguments from Propositions \ref{resolutions main1} and
\ref{main3} we conclude that $M$ has a projective resolution of finite length.
Since $M$ is projective over a finite index subgroup of $G$,
it is known that this implies that $M$ is projective over $G$.
Notice that this argument remain valid even in case $M$ is not finitely generated
(we can still use Wall's construction in order to derive a finite length projective resolution for $M$).
This gives us a proof of the following result of Aljadeff and Ginosar (see \cite{AG})
\begin{theorem} Let $k$ be a ring, $G$ a group, and $M$ a $kG$-module.
Assume that $H$ is a finite index normal subgroup of $G$, and that $M$ is projective over
every subgroup $H<E<G$ such that the quotient $E/H$ is elementary abelian.
Then $M$ is projective over $G$.\end{theorem}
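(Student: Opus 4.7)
The plan is to re-run the inductive framework of Propositions \ref{resolutions main1}, \ref{main2}, and \ref{main3}, replacing the clause ``$M$ has a projective resolution of growth rate $\leq f$'' by ``$M$ has a projective resolution of finite length'' throughout, and then close by appealing to a standard fact. First, applying the hypothesis to $E = H$ (for which $E/H$ is trivially elementary abelian) shows that $M$ is $kH$-projective.

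The core technical step is the finite-length analogue of Proposition \ref{resolutions main1}. Suppose $L$ with $H \leq L \leq G$ has $L/H$ a finite $p$-group, and $L_1, \dots, L_m$ are index-$p$ normal subgroups of $L$ containing $H$ with $\beta_{L_1}^M \cdots \beta_{L_m}^M = 0$ in $\Ext^{2m}_{kL}(M,M)$. If $M$ admits a finite-length projective resolution over each $kL_i$, then, feeding the induced finite-length $kL$-resolutions of the middle terms $\Ind_{L_i}^L(M \oplus N_i)$ of a representative Yoneda sequence into Wall's construction, the formula $P^l = F_1^l \oplus F_1^{l-1} \oplus \cdots \oplus F_m^{l-2m+2} \oplus F_m^{l-2m+1}$ is eventually zero, and the vanishing of $\beta$ then extracts a finite-length projective resolution of some $M \oplus N$ over $kL$ exactly as in the original argument. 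Observe that the only role of properness of $f$ in the original proof was to absorb constant shifts and finite direct sums into a common bound; both operations are trivially compatible with ``finite length'', and the argument nowhere needs finite generation of $M$.

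Plugging this building block into the inductions of Propositions \ref{main2} and \ref{main3} --- Serre's theorem supplying the $L_i$ whenever $L/H$ is not elementary abelian, and Lemma \ref{psylow} reducing from general $G/H$ to its $p$-Sylow preimages --- we conclude that $M$ is a direct summand of a $kG$-module with a finite-length projective resolution, hence $M$ itself has finite $kG$-projective dimension. Combined with the fact that $M$ is projective on restriction to the finite-index subgroup $kH$, we invoke the classical result that a $kG$-module of finite $kG$-projective dimension which is $kH$-projective over a finite-index subgroup $H$ is itself $kG$-projective, and the theorem follows.

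The main obstacle is really just confirming that nothing in the Wall-construction and Bockstein-product arguments secretly relied on properness of $f$ --- a routine check, since properness served only to control polynomial-shift and direct-sum operations that behave trivially in the finite-length regime. A secondary subtlety is the closing ``finite $\text{pd}$ plus $kH$-projective implies $kG$-projective'' step, which is the only external input needed beyond the machinery the paper has already developed.
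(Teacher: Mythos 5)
Your proposal is correct and takes essentially the same approach as the paper: run the finite-length analogue of Propositions \ref{resolutions main1}--\ref{main3} (noting that properness of $f$ was only a growth-rate bookkeeping device and that Wall's construction terminates in this regime), then close with the known result that finite $kG$-projective dimension together with $kH$-projectivity over a finite-index subgroup $H$ forces $kG$-projectivity. You have merely spelled out explicitly what the paper sketches in a single paragraph, including the observation that finite generation of $M$ is never used.
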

\begin{remark} The theorem of Aljadeff and Ginosar is formulated
more generally for crossed product algebras.
The theorem we cite here is a direct consequence of their theorem.\end{remark}

We deduce one more corollary which we shall use in the next section.
\begin{corollary}\label{vfcd} Let $M$ be a $kG$-module. Assume that $G$ has a finite index normal
subgroup $H$ such that $M$ has a finitely generated projective resolution $P^*$ over $kH$.
If we denote by $d$ the largest rank of an elementary abelian $p$-subgroup of $G/H$,
where $p$ is a prime number which is not invertible in $k$, then $M\in\cds_{kG}(n^{d-1})$.\end{corollary}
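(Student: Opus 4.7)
The strategy is to invoke Corollary \ref{main4}: it suffices to show, for every intermediate subgroup $H<E<G$ with $E/H\cong(\Z/p)^r$ for some $r\le d$ and some prime $p$ not invertible in $k$, that $M\in\cds_{kE}(n^{d-1})$. In fact I will produce a genuine projective resolution of $M$ over $kE$ whose $n$-th term has rank $O(n^{r-1})$, which is sharper than required.

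The first step is to build a $k[E/H]$-projective resolution $B^*\to k$ of the trivial module with $\rank_{k[E/H]}B^n=\binom{n+r-1}{r-1}=O(n^{r-1})$. I take $B^*$ to be the $r$-fold tensor product over $k$ of the standard $2$-periodic resolution
\begin{equation*}
\cdots\xrightarrow{x-1}k[\Z/p]\xrightarrow{N}k[\Z/p]\xrightarrow{x-1}k[\Z/p]\to k\to 0,\qquad N=1+x+\cdots+x^{p-1},
\end{equation*}
which is exact over any commutative ring $k$ by a direct check of kernels and images.

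Next, inflating $B^*$ along $E\twoheadrightarrow E/H$ gives a complex of $kE$-modules whose underlying $k$-complex is unchanged, and hence remains exact. Each $B^i$ is $k$-free, so tensoring over $k$ with $M$ (diagonal $E$-action) preserves exactness and yields an exact sequence
\begin{equation*}
\cdots\to B^1\otimes_k M\to B^0\otimes_k M\to M\to 0
\end{equation*}
of $kE$-modules. A direct verification gives a natural $kE$-isomorphism $k[E/H]\otimes_k M\cong\Ind_H^E(M)$ via $gH\otimes m\mapsto g\otimes g^{-1}m$, so $B^i\otimes_k M\cong\Ind_H^E(M)^{a_i}$ with $a_i=\rank_{k[E/H]}B^i$.

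Finally, I apply Wall's construction (Theorem \ref{Wall}) to the truncated exact complex $\cdots\to B^1\otimes_k M\to B^0\otimes_k M\to 0$, using for each $B^i\otimes_k M\cong\Ind_H^E(M)^{a_i}$ the projective resolution obtained (via Lemma \ref{induction}) by inducing $a_i$ copies of the hypothesized finite-length resolution $P^*$ of $M$ over $kH$. The resulting total complex $T^*$ is a projective resolution of $M$ over $kE$ with
\begin{equation*}
\rank_{kE}T^n=\sum_{i+j=n}a_i\cdot\rank_{kH}P^j.
\end{equation*}
Because $P^*$ has bounded length, only finitely many $j$ contribute and each $a_{n-j}=O(n^{r-1})$, giving $\rank_{kE}T^n=O(n^{r-1})\le O(n^{d-1})$. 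The main obstacle is the identification $B^i\otimes_k M\cong\Ind_H^E(M)^{a_i}$, which is precisely what allows Wall's construction to be fed uniformly controlled, finite-length resolutions; once this structural identification is in place, the polynomial bound reduces to the counting above.
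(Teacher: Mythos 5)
Your proof follows the paper's argument almost step for step: reduce to elementary abelian quotients via Corollary \ref{main4}, build a polynomial-growth resolution of the trivial module over the quotient group ring, tensor with $M$ to obtain a resolution of $M$ by direct sums of $\Ind_H^E(M)$, then feed this into Wall's construction against the finite-length $kH$-resolution. The identification of the middle terms with induced modules and the rank count at the end are the same.

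The one genuine discrepancy is that you build the auxiliary resolution over $k[E/H]$ and form the $r$-fold tensor product over $k$, explicitly assuming $k$ commutative. The paper does not assume $k$ commutative (see the remark in Section~\ref{prelim} that entertains ring maps from $k$ to a skew field), and its proof avoids the issue by working over $\Z[E/H]$ and tensoring with $M$ over $\Z$: the Koszul-type resolution of $\Z$ over $\Z[E/H]$ has the same rank $\binom{n+r-1}{r-1}$ in degree $n$, and $\Z[E/H]\otimes_\Z M\cong\Ind_H^E(M)$ holds for any $k$. So your argument as written proves the corollary only for commutative $k$; replacing every ``over $k$'' with ``over $\Z$'' in the construction of $B^*$ and the tensoring step restores full generality and makes your proof coincide with the paper's.
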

\begin{proof} In view of corollary \ref{main4}, we only need to show that if $H<E<G$, and $E/H$ is
$p$-elementary abelian of rank $d$, then $M\in\cds_{kE}(n^{d-1})$.
This follows from the fact that we have a free resolution $P^*$ for $\Z$ over
$\Z [E/H]$ with growth rate $n^{d-1}$. By tensoring this resolution over $\Z$ with $M$,
we get a resolution $C^*$ for $M$ by modules which are direct sums of copies of the module
$\Z [E/H]\otimes M\cong Ind_H^E(M)$. Using the resolution $Ind_H^E(P^*)\rightarrow Ind_H^E(M)$ and the complex $C^*$,
we get by Wall's construction a projective resolution for $M$ over $kE$.
An easy computation shows that it has the desired growth rate.\end{proof}
\end{section}

\begin{section}{An application for special linear groups}\label{SLN}
In this section we show how one can construct projective resolutions of
polynomial growth for the group $G=SL(n,\Z)$ where $n\geq 2$.
We begin by recalling the definition of congruence subgroups.

Let $n,m\geq 2$ be two natural numbers.
We have a natural homomorphism of groups $\pi^n_m:SL(n,\Z)\rightarrow SL(n,\Z_m)$.
We denote the kernel of $\pi^n_m$ by $\Ga^n_m$.
The group $\Ga^n_m$ is called the principal congruence subgroup of level $m$.
It is known (see Exercise 3 in Chapter 2.4 of \cite{Brown})
that for $m>2$ the group $\Ga^n_m$ is torsion free.
It is also known that if $m>2$ then the $\Z\Ga^n_m$-module $\Z$
has a finite resolution by finitely generated free modules
(see Chapter 8.9 of \cite{Brown}). By using Corollary \ref{vfcd} we see that
$\Z\in\cds_{\Z G}(a^{d-1})$
where $d$ is the largest rank of an elementary abelian subgroup of $SL(n,\Z_m)$.

We will show here how we can get a slightly better result.
We will show that $\Z\in\cds_{\Z G}(f)$, where $f(a) = a^{n-2}$.
This means that we have a projective resolution $P^*$ of $\Z\oplus N$
over $\Z G$ such that $rank(P^a)\leq ta^{n-2}$ for some number $t$
and some $\Z G$-module $N$. The module $N$ will arise as a syzygy of
$\Z$ over $SL(n,\Z)$, and therefore will be torsion free over $\Z$.
Thus, if $k$ is any ring, we can tensor this resolution with $k$ over $\Z$
in order to obtain a projective resolution of $k\oplus (k\otimes N)$ over $kG$
of growth rate $\leq a^{n-2}$. It follows that $k\in\cds_{kG}(a^{n-2})$.

In order to construct our resolution we will do the following:
we will take the group $H=\Ga^n_{pq}$, where $p$ and $q$ are two
distinct odd primes, and we will prove that if $H<E<G$ is a subgroup such that
$E/H$ is elementary abelian, then $\Z$ has a projective resolution of growth rate
$\leq a^{n-1}$ over $E$. We then use Proposition \ref{main3}. Let $H_1=\Ga^n_p$ and
$H_2=\Ga^n_q$. We claim the following
\begin{lemma}\label{boundrank}
Let $r$ be a prime number different from $p$.
Every $r$-elementary abelian subgroup of $G/H_1=SL(n,\Z_p)$
has rank $\leq n-1$. A similar result holds for $H_2$.\end{lemma}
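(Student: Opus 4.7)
Let $A$ be an $r$-elementary abelian subgroup of $G/H_1 = SL(n,\Z_p)$ with $r$ a prime distinct from $p$. My plan is to diagonalize $A$ simultaneously over an algebraic closure of $\mathbb{F}_p$ and then extract the rank bound from the determinant-one condition.

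First I would pass from $\mathbb{F}_p = \Z_p$ to a fixed algebraic closure $\ov{\mathbb{F}_p}$ and view $A$ as a subgroup of $SL(n,\ov{\mathbb{F}_p})$. Every element $a\in A$ satisfies $a^r=1$, and because $r\neq p$ the polynomial $x^r-1$ is separable over $\ov{\mathbb{F}_p}$. Hence each $a\in A$ has minimal polynomial dividing a separable polynomial, so is diagonalizable. Since the elements of $A$ pairwise commute, a standard linear algebra argument gives a single matrix $T\in GL(n,\ov{\mathbb{F}_p})$ conjugating all of $A$ into the diagonal torus $D\cong(\ov{\mathbb{F}_p}^{\,*})^n$.

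Next I would use the two constraints on the image of $A$ in $D$. The exponent condition forces each diagonal entry of each element of $TAT^{-1}$ to satisfy $x^r=1$, so $TAT^{-1}$ embeds into $\mu_r^n$, the group of $r$-th roots of unity in each coordinate. Since $\mu_r$ is cyclic of order $r$, we may regard $\mu_r^n$ as an $n$-dimensional $\mathbb{F}_r$-vector space. The determinant-one condition then cuts this down: $TAT^{-1}$ lies in the kernel of the group homomorphism
\begin{equation*}
\mu_r^n \longrightarrow \mu_r, \qquad (\zeta_1,\ldots,\zeta_n)\longmapsto \zeta_1\cdots \zeta_n,
\end{equation*}
which, as an $\mathbb{F}_r$-vector space, has dimension $n-1$. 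Consequently the $\mathbb{F}_r$-rank of $A$ is at most $n-1$.

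The argument for $H_2 = \Ga^n_q$ is identical, replacing $p$ by $q$ throughout. The only step that required any genuine input was the simultaneous diagonalizability, which rested entirely on $r\neq p$; this is the place where the hypothesis is used, and I do not anticipate any technical obstacle beyond recording the above. The bound $n-1$ is in fact achieved (for instance by the diagonal subgroup of $n$-th roots of unity of determinant $1$ whenever $r\mid p-1$), so the estimate is sharp and well suited for use in the subsequent corollary.
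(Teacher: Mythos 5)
Your proof is correct and follows essentially the same route as the paper: embed $SL(n,\Z_p)$ into $SL(n,\ov{\mathbb{F}_p})$, simultaneously diagonalize (using $r\neq p$ for separability of $x^r-1$), and then bound the rank by noting that the diagonal torus in $SL_n$ has rank $n-1$. Your version spells out the simultaneous-diagonalization and the $\mathbb{F}_r$-linear determinant argument slightly more explicitly than the paper, which simply cites these as known facts, but the underlying argument is identical.
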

\begin{proof}
We can embed the group $SL(n,\Z_p)$ into $SL(n,F)$,
where $F$ is the algebraic closure of $\Z_p$.
It is known that any finite commutative subgroup of semisimple elements in $SL(n,F)$
is conjugate to a subgroup of the diagonal matrices
(matrices of order $r$ are semisimple in characteristic $p$.
This can be seen by considering their characteristic polynomial).
But it is easy to see that the subgroup of diagonal matrices
(which is isomorphic to $(F^*)^{n-1}$) does not have an $r$-elementary abelian group of rank $>n-1$.\end{proof}

This almost finishes the construction.
The only problem is that $SL(n,\Z_p)$ has an elementary abelian $p$-subgroups
of rank $\geq \frac{n^2-1}{4}$ (see \cite{thw}). On the other hand,
it follows from the lemma that every $p$-elementary abelian subgroup of
$SL(n,\Z_q)$ is of rank $\leq n-1$.
So we shall overcome this problem by considering $H$,
which is the intersection of $H_1$ and $H_2$.

We claim the following
\begin{lemma}\label{elementary sln} Denote by $\pi_{pq}:G\rightarrow G/H = SL(n,\Z_{pq})$
the natural projection. If $E$ is an elementary abelian subgroup of $SL(n,\Z_{pq})$,
and $\widehat{E}=\pi_{pq}^{-1}(E)$ then $\Z\in\Theta_{\Z \widehat{E}}(a^{n-2})$.\end{lemma}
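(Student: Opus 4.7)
I would construct the required projective resolution of $\Z$ over $\Z\widehat{E}$ by combining Wall's construction (Theorem \ref{Wall}) with the Chinese Remainder decomposition $SL(n,\Z_{pq})\cong SL(n,\Z_p)\times SL(n,\Z_q)$ and a case analysis based on the prime exponent $r$ of the elementary abelian group $E$.

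First I would fix the geometry. Let $E_p$ and $E_q$ denote the images of $E$ under the two projections of the CRT decomposition, so that $E\subseteq E_p\times E_q$ as elementary abelian $r$-groups. The subgroup $\widehat{E}$ then carries two natural normal subgroups $\widehat{E}\cap H_1$ and $\widehat{E}\cap H_2$ with respective quotients $E_p$ and $E_q$, both containing $H=\Gamma^n_{pq}$, and there are induced embeddings $(\widehat{E}\cap H_1)/H\hookrightarrow E_q$ and $(\widehat{E}\cap H_2)/H\hookrightarrow E_p$. Since $H$ is torsion-free of finite cohomological dimension, $\Z$ admits a finite free resolution over $\Z H$, giving me bounded-rank data of bounded support at the base of both towers. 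Because $r$ is a single prime it can coincide with at most one of $p,q$, so Lemma \ref{boundrank} guarantees that at least one of $E_p,E_q$ has rank at most $n-1$; without loss of generality suppose this is $E_p$.

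Then I would build the resolution in two nested Wall steps. In the first step I apply Wall's construction to the sequence $1\to H\to \widehat{E}\cap H_1\to K\to 1$, where $K=(\widehat{E}\cap H_1)/H\hookrightarrow E_q$ is elementary abelian: the finite-support $\Z H$-resolution, convolved with the Koszul resolution of $\Z$ over $\Z K$ (of polynomial growth $a^{\text{rank}(K)-1}$), yields a $\Z(\widehat{E}\cap H_1)$-projective resolution of $\Z$ of polynomial growth $a^{\text{rank}(K)-1}$. In the second step I apply Wall to $1\to\widehat{E}\cap H_1\to\widehat{E}\to E_p\to 1$, feeding in this resolution on the kernel side and the Koszul resolution of $\Z$ over $\Z E_p$ of growth $a^{\text{rank}(E_p)-1}\leq a^{n-2}$ on the quotient side. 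The resulting projective resolution of $\Z$ over $\Z\widehat{E}$ has rank at level $N$ bounded by the Wall convolution $\sum_{i+j=N}r_is_j$ of the two input growth rates.

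The main obstacle is bounding the resulting exponent by $n-2$ in every case. When $r=p$ or $r=q$ the tower can be chosen so that the ``outer'' quotient and the kernel $K$ are both of rank $\leq n-1$, which controls the convolution. When $r\notin\{p,q\}$, however, both $E_p$ and $E_q$ may independently attain rank $n-1$, and a naive Wall convolution only gives $a^{\text{rank}(E_p)+\text{rank}(K)-1}=a^{\text{rank}(E)-1}\leq a^{2n-3}$, which is too weak. To sharpen this to $a^{n-2}$ I expect one must invoke Proposition \ref{main3} recursively inside $\widehat{E}$ with the normal subgroup $\widehat{E}\cap H_1$, thereby reducing the problem to checking $\cds$-complexity on elementary abelian extensions of $\widehat{E}\cap H_1$ whose quotients sit inside $E_p$ and so already have rank $\leq n-1$; the single-factor version of the Wall argument handles each of these, and closing this induction is the delicate step of the proof.
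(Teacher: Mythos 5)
Your CRT decomposition, the reduction via Lemma \ref{boundrank}, and the use of Wall's construction are all the right ingredients, but the two nested Wall steps cannot give the claimed bound. As you observe, convolving a resolution of growth $a^{\mathrm{rank}(K)-1}$ over $\widehat{E}\cap H_1$ with one of growth $a^{\mathrm{rank}(E_p)-1}$ over the quotient yields growth $a^{\mathrm{rank}(E)-1}$, which in the best case ($r\notin\{p,q\}$) can already be $a^{2n-3}$, and your assertion that for $r\in\{p,q\}$ both the kernel $K$ and the outer quotient can be taken of rank $\leq n-1$ is not correct: if, say, $r=p$, then $E_p$ may have rank up to $(n^2-1)/4$, and in the tower through $\widehat{E}\cap H_2$ the kernel embeds into $E_p$, so one of the two factors is large in either tower. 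The proposed recursion via Proposition \ref{main3} with normal subgroup $\widehat{E}\cap H_1$ does not close the gap, because the quotient $\widehat{E}/(\widehat{E}\cap H_1)\hookrightarrow E_p$ is already elementary abelian; the hypothesis of Proposition \ref{main3} then includes $\widehat{E}$ itself, so the proposition gives no reduction and the argument is circular.

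The missing idea is that the Wall tower should contribute only \emph{one} polynomial factor, with a group of finite cohomological dimension at the bottom. The paper enlarges $\widehat{E}$ to $\widehat{K_1}=\pi_{pq}^{-1}(E_1\times SL(n,\Z_q))$ (with $E_1=\pi_1(E)$); this overgroup contains the full congruence subgroup $H_1=\Gamma^n_p$ as a normal subgroup, with $\widehat{K_1}/H_1\cong E_1$. Since $H_1$ is torsion-free of finite cohomological dimension (principal congruence subgroup of level $p>2$), a single Wall step over $1\to H_1\to\widehat{K_1}\to E_1\to 1$, with the finite free resolution of $\Z$ over $\Z H_1$ at the base, produces a projective resolution of $\Z$ over $\Z\widehat{K_1}$ of growth $a^{\mathrm{rank}(E_1)-1}\leq a^{n-2}$ when $r\neq p$ (Lemma \ref{boundrank}); one then restricts to the finite-index subgroup $\widehat{E}\leq\widehat{K_1}$ via Remark \ref{finiteindexremark}. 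When $r=p$, use $\widehat{K_2}$ and $H_2$ instead. Your own tower can also be salvaged by noting that $\widehat{E}\cap H_1$ is a finite-index subgroup of $H_1$, hence itself torsion-free of finite cohomological dimension and of type $FP$, so the first Wall step over $H$ is unnecessary and the single step over $1\to\widehat{E}\cap H_1\to\widehat{E}\to\widehat{E}/(\widehat{E}\cap H_1)\to 1$ already gives growth $\leq a^{\mathrm{rank}(E_p)-1}\leq a^{n-2}$. Either way, the key is to place a group of finite cohomological dimension directly beneath a single elementary abelian quotient of rank $\leq n-1$, rather than climbing from $H$ through two polynomial stages.
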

\begin{proof}
First, notice that we have a natural isomorphism
\begin{equation} SL(n,\Z_{pq})\rightarrow SL(n,\Z_p)\times SL(n,\Z_q)\end{equation}
given by reduction mod $p$ and mod $q$
(the fact that this is indeed an isomorphism is an easy consequence of the Chinese Remainder Theorem).
Second, if $r$ is any prime number,
then any $r$-elementary abelian subgroup of $SL(n,\Z_{pq})$
is of the form $E_1\times E_2$, where $E_1$ is an $r$-elementary abelian subgroup of
$SL(n,\Z_p)$ and $E_2$ is an $r$-elementary abelian subgroup of $SL(n,\Z_q)$
(and we use the isomorphism above as identification).

Suppose now that $E<SL(n,\Z_{pq})$ is $r$-elementary abelian for some prime number $r$.
Then $E$ is of the form $E_1\times E_2$. The subgroup $E$ is contained in the subgroups
$K_1=E_1\times SL(n,\Z_q)$ and $K_2=SL(n,\Z_p)\times E_2$.
By Remark \ref{finiteindexremark} we see that it is enough to prove that $\Z$
has a projective resolution of growth rate $\leq a^{n-2}$ over
$\widehat{K_1}=\pi_{pq}^{-1}(K_1)$ or over $\widehat{K_2}=\pi_{pq}^{-1}(K_2)$.
The subgroup $\widehat{K_1}$ contains $H_1$ as a finite index normal subgroup,
and the quotient $\widehat{K_1}/H_1$ is isomorphic to $E_1$.
If $r\neq p$ we can use the fact that $H_1$ has a finite cohomological dimension over $\Z$,
and conclude by Corollary \ref{vfcd} and Lemma \ref{boundrank} that $\Z$ has a
$\Z \widehat{K_1}$-projective resolution of growth rate $\leq a^{n-2}$.
If $r=p$, we just consider instead the subgroups $\widehat{K_2}$ and $H_2$
and use the fact that $p\neq q$. This finishes the proof of the lemma.\end{proof}

The lemma above, together with Proposition \ref{main3} implies the following
\begin{proposition} Let $G=SL(n,\Z)$. Then $\Z\in\cds_{\Z G}(a^{n-2})$.\end{proposition}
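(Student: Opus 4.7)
The plan is to apply Proposition \ref{main3} directly, using the normal subgroup $H = \Gamma^n_{pq}$ of $G=SL(n,\Z)$ (with $p,q$ distinct odd primes) and the complexity function $f(a) = a^{n-2}$. Since $H$ is the kernel of the reduction map $\pi_{pq}\colon G \to SL(n,\Z_{pq})$, it is a normal subgroup of finite index. So Proposition \ref{main3} reduces the problem to verifying that $\Z \in \cds_{\Z E}(a^{n-2})$ for every intermediate subgroup $H < E < G$ such that $E/H$ is elementary abelian.

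To verify this hypothesis, I would argue as follows. Any such $E$ is of the form $E = \pi_{pq}^{-1}(\bar E)$ for $\bar E = E/H$ an elementary abelian subgroup of $SL(n,\Z_{pq})$. But this is exactly the hypothesis of Lemma \ref{elementary sln}, which gives $\Z \in \Theta_{\Z E}(a^{n-2})$, and in particular $\Z \in \cds_{\Z E}(a^{n-2})$ (since $\Theta \subseteq \cds$ trivially, by taking $N=0$). Thus the hypothesis of Proposition \ref{main3} is met, and we conclude $\Z \in \cds_{\Z G}(a^{n-2})$.

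There is essentially no obstacle, since all the technical work has already been done in the preceding lemmas: Lemma \ref{boundrank} controls the ranks of elementary abelian subgroups of $SL(n,\Z_p)$ at primes $r \neq p$, Lemma \ref{elementary sln} combines this with the Chinese Remainder decomposition $SL(n,\Z_{pq}) \cong SL(n,\Z_p) \times SL(n,\Z_q)$ to handle \emph{every} prime (switching between $H_1$ and $H_2$ depending on whether the elementary abelian subgroup has order a power of $p$ or $q$), and Proposition \ref{main3} packages the inductive argument via Serre's theorem. The only thing worth noting explicitly in the write-up is that the use of two distinct primes $p,q$ is what allows us to avoid the bad case $r=p$ in $SL(n,\Z_p)$ (where the $p$-rank can be as large as $(n^2-1)/4$), since any $p$-elementary abelian quotient can instead be bounded via the other component $SL(n,\Z_q)$.
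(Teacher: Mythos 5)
Your proof is correct and follows exactly the paper's own argument: the paper likewise takes $H=\Gamma^n_{pq}$ and derives the proposition as an immediate consequence of Lemma \ref{elementary sln} combined with Proposition \ref{main3}. Your added observation that every $H<E<G$ with $E/H$ elementary abelian arises as $\pi_{pq}^{-1}(\bar E)$, and that $\Theta\subseteq\cds$, simply makes explicit the steps the paper leaves implicit.
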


We claim that $SL(n,\Z)$ does not have a projective resolution of lower growth rate. More precisely:
\begin{lemma} Let $G=SL(n,\Z)$. Assume that we have a $\Z G$-module $M$ and a projective resolution $P^*$
for $M\oplus \Z$ over $\Z G$. Then there is a constant $c>0$
such that $rank_{\Z G}(P^a) \geq ca^{n-2}$ for every $a$.\end{lemma}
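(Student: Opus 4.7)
The plan is to reduce the lower bound on $\rank_{\Z G}(P^a)$ to a lower bound on $\dim_{\Z_2} H^a(G;\Z_2)=\dim_{\Z_2}\Ext^a_{\Z G}(\Z,\Z_2)$, and then to produce that cohomological lower bound by exhibiting $n-1$ algebraically independent classes in $H^*(G;\Z_2)$ coming from the standard real representation of $G=SL(n,\Z)$. The elementary abelian subgroup $E\subset G$ consisting of diagonal matrices with $\pm 1$ entries and determinant $1$, which has $\Z_2$-rank $n-1$, will witness the algebraic independence.

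The first step is a purely formal cohomological rank bound. Each projective $P^a$ is a direct summand of some free module $(\Z G)^{r_a}$ with $r_a=\rank_{\Z G}(P^a)$, so $\dim_{\Z_2}\Hom_{\Z G}(P^a,\Z_2)\leq r_a$. Since $P^*$ resolves $M\oplus \Z$, the cohomology of $\Hom_{\Z G}(P^*,\Z_2)$ computes $\Ext^*_{\Z G}(M\oplus\Z,\Z_2)$, which contains $\Ext^*_{\Z G}(\Z,\Z_2)=H^*(G;\Z_2)$ as a direct summand via the splitting $M\oplus\Z\to\Z$. Combining gives $r_a\geq \dim_{\Z_2} H^a(G;\Z_2)$, so it suffices to exhibit a constant $c>0$ with $\dim_{\Z_2} H^a(G;\Z_2)\geq c a^{n-2}$.

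For the cohomological lower bound, I would use the Stiefel-Whitney classes $w_2,\ldots,w_n\in H^*(G;\Z_2)$ of the standard inclusion $\rho:G\hookrightarrow SL(n,\mathbb{R})$; the class $w_1$ vanishes because $\rho$ factors through the connected group $SL(n,\mathbb{R})$. Restricting $\rho$ to $E$ splits it as $L_1\oplus\cdots\oplus L_n$ where $L_i$ is the sign character on the $i$-th coordinate, and $H^*(E;\Z_2)\cong \Z_2[\alpha_1,\ldots,\alpha_n]/(\alpha_1+\cdots+\alpha_n)$ with $\alpha_i=w_1(L_i)$. By the Whitney sum formula, the restriction of $w_k$ to $E$ equals the elementary symmetric polynomial $e_k(\alpha_1,\ldots,\alpha_n)$. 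Since $\Z_2[\alpha_1,\ldots,\alpha_n]$ is free of rank $n!$ over its invariant subring $\Z_2[e_1,\ldots,e_n]$, quotienting by $e_1=0$ yields a ring free of rank $n!$ over $\Z_2[e_2,\ldots,e_n]$; in particular the restrictions of $w_2,\ldots,w_n$ remain algebraically independent. Hence so do $w_2,\ldots,w_n$ themselves in $H^*(G;\Z_2)$, and the graded polynomial subring $\Z_2[w_2,\ldots,w_n]$ has Hilbert series $\prod_{k=2}^n(1-t^k)^{-1}$ whose coefficient of $t^a$ grows like $a^{n-2}$, as required.

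I expect the main obstacle to be the algebraic-independence step. The rank-versus-cohomology inequality is essentially bookkeeping, and the Hilbert-series estimate at the end is elementary; but producing enough classes in $H^*(SL(n,\Z);\Z_2)$ that visibly survive restriction to a specific elementary abelian subgroup is the only substantive ingredient, and choosing the right subgroup $E$ (so that the restrictions line up with the symmetric polynomials) is the crux.
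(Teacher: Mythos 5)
Your argument is correct, and it reaches the required conclusion by a genuinely different route than the paper. The paper also exploits the rank-$(n-1)$ diagonal $\pm 1$ subgroup $E$, but instead of working with $H^*(G;\Z_2)$ it passes to the congruence subgroup $\Ga^n_3$: letting $H$ be the preimage in $G$ of $E\subset SL(n,\Z_3)$, the extension $1\to\Ga^n_3\to H\to E\to 1$ is split by the copy of $E$ inside $SL(n,\Z)$, so inflation $H^*(E,\Z)\to H^*(H,\Z)$ is injective, and then one bounds $\rank_{\Z G}(P^a)$ from below by $\rank_\Z H^a(E,\Z)$ through a chain of rank inequalities for abelian groups (using that a submodule of a finitely generated module over a PID needs no more generators, and that restriction along a finite-index inclusion inflates the rank by at most $|G/H|$). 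You instead bound $\rank_{\Z G}(P^a)$ directly by $\dim_{\Z_2}H^a(G;\Z_2)$ via $\Hom_{\Z G}(P^a,\Z_2)$ and the splitting of $\Ext^*_{\Z G}(M\oplus\Z,\Z_2)$, and then produce the polynomial growth from the Stiefel--Whitney classes $w_2,\dots,w_n$ of the standard representation, shown algebraically independent by restriction to $E$ and the invariant-theoretic fact that $\Z_2[\alpha_1,\dots,\alpha_n]/(e_1)$ is finite and free over $\Z_2[e_2,\dots,e_n]$. Your formal reduction is somewhat cleaner, since it avoids the detour through $H$ and $\Ga^n_3$ and the less transparent inequalities on minimal numbers of generators of torsion abelian groups; the price is the extra input of characteristic classes and symmetric-function theory. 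One small remark: the reason $w_1=0$ is simply that $w_1$ of a flat real bundle is the sign-of-determinant character, which is trivial on $SL$; connectedness of $SL(n,\mathbb{R})$ is not quite the right-sounding justification for a discrete group, though the conclusion is the same.
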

\begin{proof} Consider the finite index congruence subgroup $\Ga^n_3$ and the quotient $SL(n,\Z_3)$.
Inside this quotient we have a 2- elementary abelian subgroup of rank $n-1$.
This is the subgroup $E$ which contains all matrices of the form
$diag((-1)^{e_1},\ldots,(-1)^{e_n})$ such that $\sum e_i$ $=$ $0$ $mod$ $2$.
Denote by $H$ the inverse image of $E$ inside $G$.
We thus have a short exact sequence
\begin{equation}1\rightarrow \Ga^n_3\rightarrow H\rightarrow E\rightarrow 1\end{equation}
We claim that this sequence splits. Indeed, the subgroup of $H$ with the same description
(all matrices of the form  $diag((-1)^{e_1},\ldots,(-1)^{e_n})$ such that $\sum e_i$ $=$ $0$ $mod$ $2$)
maps isomorphically onto $E$. This means in particular that the inflation map $H^*(E,\Z)\rightarrow H^*(H,\Z)$
is one to one.
Since the rank of abelian groups is monotonously increasing, we have
\begin{equation} rank_{\Z}(H^a(E,\Z))\leq rank_{\Z}(H^a(H,\Z))= rank_{\Z}(Ext_{\Z H}^a(\Z,\Z))\end{equation}
\begin{equation*}\leq rank_{\Z}(Ext_{\Z H}^a(\Z\oplus M,\Z))\leq rank_{\Z}(Hom_{\Z H}(P^a,\Z))\end{equation*}
\begin{equation*}\leq rank_{\Z H}(P^a)\leq |G/H| rank_{\Z G}(P^a)\end{equation*}
But the rank of $H^a(E,\Z)$ is bounded from below by $\frac{a^{n-2}}{(n-2)!}$
(this is because the structure of the cohomology ring is known- it is a polynomial ring generate by $n-1$ variables in degree 1).
We conclude that $\frac{a^{n-2}}{(n-2)!|G/H|} \leq rank_{\Z G}(P^a)$ as desired. \end{proof}
\begin{remark} We could have use, of course, $\Ga^n_p$ for any odd prime $p$.
The choice of 3 was arbitrary.\end{remark}
\end{section}

\end{document}